  \def\theta{theta}%
  \def\pi{pi}%
  \def\in{in }%
\newcounter{mgncount}
\newtheorem*{conj}{Conjecture}
\newtheorem*{qu-}{Question}
\declaretheorem[name=Theorem,numberwithin=section]{thm}
\declaretheorem[name=Lemma,sibling=thm]{lemma}
\declaretheorem[name=Corollary,sibling=thm]{cor}
\declaretheorem[name=Theorem,numbered=no]{theorem}
\declaretheorem[style=remark,name=Remark,numbered=no]{remark}
\numberwithin{equation}{section}
\newcommand{\wh}{\widehat}
\newcommand{\cn}{\colon}
\newcommand{\bbR}{\mathbb{R}}
\newcommand{\bbS}{\mathbb{S}}
\newcommand{\8}{\infty}
\newcommand{\al}{\alpha}
\newcommand{\de}{\delta}
\newcommand{\la}{\lambda}
\newcommand{\si}{\sigma}
\newcommand{\Si}{\Sigma}
\newcommand{\ze}{\zeta}
\newcommand{\De}{\Delta}
\newcommand{\cC}{\mathcal{C}}
\newcommand{\cD}{\mathcal{D}}
\newcommand{\cK}{\mathcal{K}}
\newcommand{\cP}{\mathcal{P}}
\newcommand{\rt}{\sqrt}
\newcommand{\ip}[2]{\left\langle #1,#2 \right\rangle}
\DeclareMathOperator{\vol}{vol}
\renewcommand{\(}{\left(}
\renewcommand{\)}{\right)}
\newcommand{\eq}[1]{\begin{equation}\begin{alignedat}{2} #1 \end{alignedat}\end{equation}}
\newcommand{\ra}{\rightarrow}
\newcommand{\q}{\quad}
\begin{document}

\title[Capillary Blaschke-Santal\'o Inequality]{On the Conjectured Capillary Blaschke-Santal\'o Inequality}
	\author[C. Cabezas-Moreno, Y. Hu, M. N. Ivaki]{Carlos Cabezas-Moreno, Yingxiang Hu, Mohammad N. Ivaki}
\begin{abstract}
We prove that the conjectured capillary Blaschke--Santal\'{o} inequality holds for any unconditional, strictly convex capillary hypersurface when $\theta \in \left(0, \tfrac{\pi}{2}\right)$. Moreover, for $\theta \in \left(\tfrac{\pi}{2}, \pi\right)$, we show that the capillary volume product has no finite upper bound.
\end{abstract}
\maketitle

\section{Introduction}
Let $(\mathbb{R}^n, \delta = \langle \cdot, \cdot \rangle, D)$ be the Euclidean space equipped with its standard inner product and flat connection. Denote by $(\mathbb{S}^{n-1}, \bar{g}, \bar{\nabla})$ the unit sphere with its induced round metric and Levi-Civita connection. Let $\{E_i\}_{i=1}^n$ be the standard orthonormal basis of $\mathbb{R}^n$, and define the upper half-space as
\[
\mathbb{R}^n_+ = \{ x \in \mathbb{R}^n : \langle x, E_n \rangle > 0 \}.
\] 

Let $\Sigma$ be a properly embedded, smooth, compact, connected, and orientable hypersurface contained in the closure $\overline{\mathbb{R}^n_+}$, such that its relative interior lies entirely in $\mathbb{R}^n_+$ and its relative boundary satisfies $\partial \Sigma \subset \partial \mathbb{R}^n_+$. We say that $\Sigma$ is a capillary (or $\theta$-capillary) hypersurface with constant contact angle $\theta \in (0, \pi)$ if the following condition holds along $\partial \Sigma$:
\begin{equation}\label{capillary angle condition}
\langle \nu, E_n \rangle \equiv \cos \theta,
\end{equation}
where $\nu$ denotes the outer unit normal of $\Sigma$. The region enclosed by the hypersurface $\Sigma$ and the hyperplane $\partial \mathbb{R}^n_+$ is denoted by $\widehat{\Sigma}$. We say that a capillary hypersurface $\Sigma$ is strictly convex if $\widehat{\Sigma}$ is a convex body and the second fundamental form of $\Sigma$ is positive definite.  

The capillary spherical cap of radius $r$ intersecting $\partial \mathbb{R}^n_+$ at a constant angle $\theta \in (0, \pi)$ is defined as
\begin{equation}
\mathcal{C}_{\theta, r} \coloneqq \left\{ x \in \overline{\mathbb{R}^n_+} \mid |x + r \cos \theta E_n| = r \right\}.
\end{equation}
For simplicity, we set $\mathcal{C}_\theta = \mathcal{C}_{\theta, 1}$ and $\mathbb{S}^{n-1}_\theta = \{ x \in \mathbb{S}^{n-1} \mid x_n \geq \cos\theta \}$.

Let $\tilde{\nu} = \nu - \cos\theta\, E_n : \Sigma \to \mathcal{C}_\theta$ denote the capillary Gauss map of $\Sigma$. When $\Sigma$ is strictly convex, $\tilde{\nu}$ is a diffeomorphism (cf. \cite[Lem. 2.2]{MWWX25}). In this case, the capillary support function of $\Sigma$, $s_\Sigma : \mathcal{C}_\theta\to \bbR$, is defined as
\begin{equation}
s_\Sigma(\zeta) = \langle \tilde{\nu}^{-1}(\zeta), \zeta + \cos\theta E_n \rangle, \quad \zeta \in \mathcal{C}_\theta.
\end{equation}

For example, the capillary support function of $\mathcal{C}_\theta$ is given by
\begin{equation}
\ell(\zeta) = \sin^2\theta - \cos\theta \,\langle \zeta, E_n \rangle = 1 - \cos\theta\, \langle x, E_n \rangle
\end{equation}
where $x := \zeta + \cos\theta\, E_n$.

Moreover, the hypersurface $\Sigma$ can be parametrized via the inverse capillary Gauss map:
\begin{equation}
X : \mathcal{C}_\theta \to \Sigma, \q X(\zeta) = \tilde{\nu}^{-1}(\zeta) = \nu^{-1}(\zeta + \cos\theta E_n).
\end{equation}

We say that a function $\varphi \in C(\mathcal{C}_\theta)$ is even if
\begin{equation}
    \varphi(-\zeta_1, \ldots, -\zeta_{n-1}, \zeta_n) 
    = \varphi(\zeta_1, \ldots, \zeta_{n-1}, \zeta_n)
    \quad \forall \zeta \in \mathcal{C}_\theta.
\end{equation}
We also say that a function $\varphi \in C(\mathcal{C}_\theta)$ is unconditional if
\begin{equation}
    \varphi(\pm \zeta_1, \ldots, \pm \zeta_{n-1}, \zeta_n) 
    = \varphi(\zeta_1, \ldots, \zeta_{n-1}, \zeta_n)
    \quad \forall \zeta \in \mathcal{C}_\theta.
\end{equation}
Similarly, a capillary hypersurface $\Sigma$ is said to be even (resp. unconditional) if its capillary support function $s_\Sigma$ is even (resp. unconditional). 

Suppose $\Sigma$ is an even, strictly convex capillary hypersurface. Define 
\eq{
Y : \mathcal{C}_\theta \to \overline{\mathbb{R}^n_+},\q Y(\zeta) = \frac{\ell(\zeta)}{s_\Sigma(\zeta)} \zeta.
}
Since $s_\Sigma>0$ on $\mathcal C_\theta$, $Y$
is a radial graph over $\mathcal C_\theta$. It follows that $\Sigma^* =Y(\mathcal C_\theta)$ is star-shaped with respect to the origin, and thus determines a compact region in $\overline{\bbR^n_+}$ bounded by $Y(\mathcal C_\theta)$ and $\partial\bbR^n_+$:
\eq{
\widehat{\Sigma^*}:=\left\{ r\zeta:\ \zeta\in\mathcal C_\theta,\ 0\le r\le \frac{\ell(\zeta)}{s_\Sigma(\zeta)}\right\}
\subset \overline{\bbR^n_+}.
}
We refer to $\widehat{\Sigma^*}$ as the \emph{capillary polar body} of $\widehat{\Sigma}$. Define
\begin{equation}
\sigma_\Sigma(\zeta) = \frac{s_\Sigma(\zeta)}{\ell(\zeta)} \quad \text{for } \zeta \in \mathcal{C}_\theta.
\end{equation}
By \cite[Prop. 2.9]{MWW25}, we have
\begin{equation}
\vol(\widehat{\Sigma^*}) = \int_{\mathcal{C}_\theta} \frac{1}{\sigma_\Sigma^n} \, dV_{\mathcal{C}_\theta},
\end{equation}
where $dV_{\mathcal{C}_\theta} = \frac{1}{n} \ell \, d\si$ is the cone volume measure of $\widehat{\mathcal{C}_\theta}$ and
$\si$ denotes the spherical Lebesgue measure on $\bbS^{n-1}$. We also have
\begin{equation}
\vol(\widehat{\Sigma}) = \int_{\mathcal{C}_\theta} \frac{\sigma_\Sigma}{G_\Sigma} \, dV_{\mathcal{C}_\theta},
\end{equation}
where $G_\Sigma$ denotes the Gauss curvature of $\Sigma$ (as a function on $\mathcal{C}_\theta$).

\begin{remark}
It is clear that $\mathcal{C}_\theta^* = \mathcal{C}_\theta$. In general, $\Sigma^*$ is not a convex capillary hypersurface.
\end{remark}
We now recall the conjectured Blaschke--Santal\'{o} inequality in the class of even capillary hypersurfaces:

\begin{conj}\cite{MWW25}
Let $\theta \in (0, \frac{\pi}{2})$ and $\Sigma$ be an even, smooth, strictly convex capillary hypersurface. Then
\begin{equation}\label{conj}
\vol(\widehat{\Sigma}) \vol(\widehat{\Sigma^*}) \leq \vol(\widehat{\mathcal{C}_\theta})^2.
\end{equation}
Moreover, equality holds if and only if $\Sigma = \lambda \mathcal{C}_\theta$ for some $\lambda > 0$.
\end{conj}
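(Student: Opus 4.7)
The strategy I would pursue is to reduce the conjectured inequality to a reverse Cauchy--Schwarz-type integral inequality on the capillary cap by exploiting the unconditional symmetry, and then to establish this bound through a Prékopa--Leindler-type functional inequality on a suitable Euclidean chart.

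\medskip

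First, by the unconditional invariance of $s_\Sigma$, $\sigma_\Sigma$, and $G_\Sigma$, each integral in the Blaschke--Santaló product equals $2^{n-1}$ times its restriction to the positive octant $\mathcal{C}_\theta^+ := \mathcal{C}_\theta \cap \{\zeta_i \geq 0 : i<n\}$. It therefore suffices to prove
\eq{
\left(\int_{\mathcal{C}_\theta^+}\frac{\sigma_\Sigma}{G_\Sigma}\,dV_{\mathcal{C}_\theta}\right)\left(\int_{\mathcal{C}_\theta^+}\sigma_\Sigma^{-n}\,dV_{\mathcal{C}_\theta}\right) \leq \left(\int_{\mathcal{C}_\theta^+}\,dV_{\mathcal{C}_\theta}\right)^2.
}
Equality holds when $\Sigma=\mathcal{C}_\theta$, since then $\sigma_\Sigma\equiv 1$ and $G_\Sigma\equiv 1$. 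Observe that a direct Cauchy--Schwarz applied to the pair $\sigma_\Sigma/G_\Sigma$ and $\sigma_\Sigma^{-n}$ yields the \emph{opposite} inequality (a lower bound on the product by $(\int \sqrt{\sigma_\Sigma^{1-n}/G_\Sigma}\,dV_{\mathcal{C}_\theta})^2$), so the unconditional symmetry together with convexity of $\widehat\Sigma$ must do essential work to produce the desired reverse-type bound.

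\medskip

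Next, I would realize this reverse bound via a Prékopa--Leindler-type functional inequality. Convexity of $\widehat\Sigma$ makes $s_\Sigma$ the support function of a convex body, and unconditional symmetry together with the capillary boundary condition $\langle\nu,E_n\rangle=\cos\theta$ on $\partial\Sigma$ should translate, after pulling $\mathcal{C}_\theta^+$ back to a Euclidean chart, into log-concavity of a suitable function built from $s_\Sigma$. Choosing the chart so that the weight $\ell$ in $dV_{\mathcal{C}_\theta}=\ell\,d\mathcal{H}^{n-1}/n$ becomes a clean (Lebesgue or Gaussian) density, one matches the product integral with a Mahler-type functional product bounded by the canonical value $V(\widehat{\mathcal{C}_\theta})^2$. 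Equality in Prékopa--Leindler would then force the log-concave density to be of a specific Gaussian form, which upon untransporting through the chart yields $\sigma_\Sigma\equiv\const$, i.e.\ $\Sigma=\lambda\mathcal{C}_\theta$.

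\medskip

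The main obstacle is the construction of the chart together with the verification of the required log-concavity. The positive octant $\mathcal{C}_\theta^+$ has two qualitatively different boundary components: the coordinate walls $\zeta_i=0$, where unconditional symmetry provides a natural Neumann condition, and the ``capillary circle'' $\zeta_n=0$, where $s_\Sigma$ satisfies a boundary condition inherited from $\langle\nu,E_n\rangle=\cos\theta$. Finding a Euclidean chart---for example, inverse stereographic projection from $-E_n$ or gnomonic projection from the ball center $-\cos\theta\,E_n$---that simultaneously converts the weight $\ell$ into a clean density and renders the capillary angle condition into a boundary condition amenable to Prékopa--Leindler is the crux of the argument. The restriction $\theta<\pi/2$ is likely essential here, since for $\theta\geq\pi/2$ the reflection of $\widehat\Sigma$ across $\partial\mathbb{R}^n_+$ fails to preserve convexity---consistent with the failure of the upper bound established in the other part of the paper for $\theta\in(\pi/2,\pi)$.
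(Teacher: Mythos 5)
Your high-level strategy matches the paper's: exploit unconditional symmetry to reduce to the positive octant, then run a Pr\'ekopa--Leindler argument and extract the equality case from the rigidity in Pr\'ekopa--Leindler. You also correctly flag that a direct Cauchy--Schwarz goes the wrong way and that $\theta<\pi/2$ must enter. However, the proposal leaves unfilled precisely the construction you yourself call ``the crux,'' and the construction the paper actually uses is not any of the charts you float. The paper does not project $\mathcal{C}_\theta^+$ by stereographic or gnomonic maps. Instead, it reflects $\widehat{\mathcal{C}_\theta}$ across $\{x_n=0\}$ to form an auxiliary unconditional convex body $C$, sets $V=\tfrac12\,p_C^2$ (so that $V^*=\tfrac12\,h_C^2$ is the Legendre dual), and the ``chart'' is the gradient map $DV$, which is a diffeomorphism from $(0,\infty)^n$ onto the cone $\{y\in(0,\infty)^n: y_n>|y|\cos\theta\}$. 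The Monge--Amp\`ere determinant $\det D^2V^*(y)=\bigl(1-\tfrac{|y_n|}{|y|}\cos\theta\bigr)^{n+1}$ on that cone is exactly what produces the $\ell^{n+1}$ weight after passing to spherical coordinates, not any log-concavity of a function built from $s_\Sigma$.

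The genuinely missing ingredient is the paper's Lemma~\ref{lem 1}: the concavity of $x\mapsto V(\sqrt{x})$ on $(0,\infty)^n$, which is a nontrivial structural fact about the reflected capillary cap verified by an explicit Hessian computation, and which fails when $\theta>\pi/2$ (it becomes convexity, which is why the bound is later shown to be unbounded in that regime). It is this concavity, combined with unconditionality, that yields the key pointwise inequality $\langle a, DV(b)\rangle \geq 2V(\sqrt{ab})$ that feeds into Pr\'ekopa--Leindler, following the framework of Fradelizi--Meyer and Colesanti--Kolesnikov--Livshyts--Rotem. Without identifying $C$, $V$, the Legendre/gradient-map structure, and the concavity of $V(\sqrt{\cdot})$, the Pr\'ekopa--Leindler step cannot be carried out, so the proposal as stated has a real gap rather than merely deferred routine details. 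One last caveat: the statement you were asked to prove is the conjecture for \emph{even} hypersurfaces, whereas your argument (like the paper's theorem) invokes full unconditional invariance from the outset; the general even case remains open.
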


Note that for $\theta = \tfrac{\pi}{2}$, the inequality \eqref{conj} follows from the classical 
Blaschke--Santal\'o inequality, and equality holds for any half-ellipsoid of the form
\eq{
\sum_{i=1}^na_i^2x_i^2=1,\q  x_n\geq 0,\q \text{where $a_i>0$ for $i=1,\ldots, n$. } 
}

 The classical Blaschke--Santal\'o inequality in the class of origin-symmetric convex bodies (i.e., compact convex sets with non-empty interior such that $x \in K \Rightarrow -x \in K$) states that
\begin{equation}\label{eq:BS}
\vol(K)\vol(K^{\circ}) \leq \vol(B^n)^2,
\end{equation}
where $K^{\circ} = \{y \in \mathbb{R}^n : \langle x, y \rangle \leq 1, \, \forall x \in K\}$ is the (standard) polar body of $K$ and $B^n$ denotes the unit ball of $(\mathbb{R}^n, \delta)$. Moreover, equality holds only for origin-centered ellipsoids. The inequality was proved by Blaschke \cite{Bla45} for $n = 2, 3$ and by Santal\'o \cite{San49} for general $n$. The full characterization of the equality cases was later obtained by Saint-Raymond \cite{SR80}. See also \cite{MP90} and \cite{AAKM04}.

We say a function $f:\bbR^n\to \bbR$ is unconditional if
\eq{
f(\pm x_1,\ldots,\pm x_n)=f(x_1,\ldots,x_n) \quad \forall x\in \bbR^n.
}
Similarly, a convex body $K$ is said to be unconditional if its standard support function $h_K$ is unconditional. 

In this paper, we prove \eqref{conj} for unconditional, strictly convex capillary hypersurfaces.

\begin{thm}\label{thm 1}
Let $\theta \in \left(0, \frac{\pi}{2}\right)$. For any unconditional convex body $K$ (not necessarily smooth or capillary) with the standard support function $h_K$ we have
\begin{equation}
\frac{\vol(K)}{2n} 
\int_{\mathbb{S}^{n-1}_\theta} 
\frac{(1 - \cos\theta \, x_n)^{n+1}}{h_K^n(x)} \, d\si(x) 
\leq \vol(\widehat{\mathcal{C}_\theta})^2.
\end{equation}
In particular, when $\Sigma$ is an unconditional, smooth, strictly convex capillary hypersurface, we have
\begin{equation}
\vol(\widehat{\Sigma}) 
\vol(\widehat{\Sigma^*}) 
\leq \vol(\widehat{\mathcal{C}_\theta})^2.
\end{equation}
Moreover, equality holds if and only if 
$\Sigma = \lambda \mathcal{C}_\theta$ for some $\lambda > 0$.
\end{thm}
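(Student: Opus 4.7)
The strategy has two parts: (I) reduction of the capillary Blaschke--Santal\'o inequality (second assertion) to the general integral inequality (first assertion) via a doubling construction; (II) proof of the integral inequality itself.

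For part (I), I would form the doubled body $\ti K := \wh\Si \cup R(\wh\Si)$, where $R$ denotes reflection across $\del \bbR^n_+$. Since $\Si$ is capillary-unconditional (symmetric in $x_1,\dots,x_{n-1}$), $\ti K$ is fully unconditional on $\bbR^n$. Convexity of $\ti K$ requires the tangent cones of $\wh\Si$ and $R(\wh\Si)$ to combine convexly along their shared flat face $\wh\Si \cap \del\bbR^n_+$; I would verify this by a local computation at each $p \in \del\Si$, where the capillary condition $\langle \nu, E_n\rangle = \cos\theta > 0$ forces the union of local tangent cones to be a sublevel set of a convex function of $(v', |v_n|)$, and extends globally using that the vertical projection of $\wh\Si$ onto $\del \bbR^n_+$ coincides with its flat base (a consequence of strict convexity combined with $\cos\theta > 0$). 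Then $\vol(\ti K) = 2\vol(\wh\Si)$, and for $x \in \bbS^{n-1}_\theta$ (so $x_n \geq \cos\theta > 0$) the supremum in the definition of $h_{\ti K}(x)$ is attained on $\wh\Si$, giving $h_{\ti K}(x) = h_{\wh\Si}(x)$. Rewriting the formula $\vol(\wh{\Si^*}) = \int_{\cC_\theta} \sigma_\Si^{-n}\,dV_{\cC_\theta}$ of \cite[Prop.~2.9]{MWW25} via the translation isometry $\cC_\theta \to \bbS^{n-1}_\theta$, $\zeta \mapsto \zeta + \cos\theta\,E_n$, yields $\vol(\wh{\Si^*}) = \frac{1}{n}\int_{\bbS^{n-1}_\theta}(1-\cos\theta\, x_n)^{n+1}/h_{\wh\Si}^n(x)\,dx$, and applying the first inequality to $K = \ti K$ immediately yields the capillary Blaschke--Santal\'o.

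For part (II), the plan is to prove the weighted integral inequality for unconditional $K \subset \bbR^n$ by reducing to the classical Blaschke--Santal\'o inequality (or Saint-Raymond's refinement for unconditional bodies), either via construction of an auxiliary unconditional convex body $M = M(K)$ whose volume and standard polar volume realize the two factors of the target inequality, or via a functional Blaschke--Santal\'o-type argument (Artstein-Avidan--Klartag--Milman) applied to suitable log-concave functions encoding $K$ and the weight. A useful observation is that $1 - \cos\theta\, x_n = h_{\bar L}(x)$ on $\bbS^{n-1}$ for the translated unit ball $\bar L := B^n - \cos\theta\, E_n$, so the weight is the $(n+1)$-th power of a bona fide support function. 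Moreover, the inequality is sharp at $K = \ti L := \wh{\cC_\theta} \cup R(\wh{\cC_\theta})$: indeed $\vol(\ti L) = 2\vol(\wh{\cC_\theta})$, and by the cone-volume formula $\int_{\bbS^{n-1}_\theta}(1-\cos\theta\, x_n)\,dx = n\vol(\wh{\cC_\theta})$, so both sides equal $\vol(\wh{\cC_\theta})^2$, identifying $\ti L$ as the extremal unconditional body.

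The principal obstacle will be producing $\vol(\wh{\cC_\theta})^2$ rather than $\vol(B^n)^2$ on the right-hand side, which means the reduction must absorb the anisotropy introduced by $\ell$. Once the correct construction or functional pair is found, the equality case follows by tracing the equality case of the underlying classical inequality (origin-symmetric ellipsoid) back through the construction: equality forces $K = \lambda\,\ti L$, which in the capillary setting unwinds to $\Si = \lambda\,\cC_\theta$.
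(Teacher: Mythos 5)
Your Part (I) (the doubling reduction $\ti K = \wh\Si \cup R(\wh\Si)$, and the translation of the capillary polar volume into the weighted integral over $\bbS^{n-1}_\theta$) is correct and agrees with what the paper does; the verification that $\ti K$ is convex when $\cos\theta>0$ is fine. Your identification of the extremal body as $\ti L = \wh{\cC_\theta}\cup R(\wh{\cC_\theta})$ and the observation that the weight $(1-\cos\theta\,x_n)$ is the support function of a translated ball are also correct.

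The gap is that Part (II) — the actual first inequality — is not proven. You write ``either via construction of an auxiliary unconditional convex body $M(K)$ \ldots or via a functional Blaschke--Santal\'o-type argument,'' and you yourself flag the ``principal obstacle'' of producing $\vol(\wh{\cC_\theta})^2$ rather than $\vol(B^n)^2$, without resolving it. That obstacle \emph{is} the theorem; sketching two candidate strategies without carrying either out is not a proof. The paper's actual route is a weighted Pr\'ekopa--Leindler argument in the style of Colesanti--Kolesnikov--Livshyts--Rotem and Fradelizi--Meyer: it sets $V=\tfrac12 p_C^2$ with $C=\wh{\cC_\theta}\cup R(\wh{\cC_\theta})$, proves the crucial concavity of $x\mapsto V(\sqrt{x})$ on $(0,\infty)^n$ (this is exactly where $\theta<\tfrac\pi2$ is used, via the sign of $\cos\theta$), deduces $\langle a, DV(b)\rangle\ge 2V(\sqrt{ab})$, feeds that into Pr\'ekopa--Leindler after the exponential change of variables, and then computes the resulting integrals via the Jacobian of $DV$ and the Monge--Amp\`ere measure of $V^*=\tfrac12 h_C^2$ (which is precisely the measure $(1-\cos\theta\,x_n/|x|)^{n+1}\,dx$ on the cone over $\bbS^{n-1}_\theta$). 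Nothing in your proposal locates this mechanism, and no straight reduction to the classical or AAKM functional Santal\'o will supply the required anisotropic constant.

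A secondary issue: your closing sentence on the equality case presumes that tracing back through a classical Santal\'o equality (``origin-symmetric ellipsoid'') directly gives $\Si=\lambda\cC_\theta$. It does not. The classical equality case for unconditional bodies allows arbitrary coordinate ellipsoids. In the paper the equality characterization comes from Dubuc's rigidity for Pr\'ekopa--Leindler, which first gives $A\Si=\cC_\theta$ for some positive diagonal $A$; one then needs the separate lemma that a diagonal linear image of $\cC_\theta$ which is again $\theta$-capillary must be a homothety. That last step uses the contact-angle condition, not the inequality, and is missing from your sketch.
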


See also \autoref{cor 1}. Let $\theta \in \left(0, \frac{\pi}{2}\right)$ and $\Delta_{\mathcal{C}_\theta}$ denote the centro-affine Laplacian of $\mathbb{S}^{n-1} - \cos\theta \,E_n$ restricted to $\mathcal{C}_\theta$. By the previous theorem, $\mathcal{C}_\theta$ is a local maximizer of the volume product in the class of unconditional capillary bodies. By linearizing the inequality, we obtain the following result:

\begin{thm}\label{thm 2}
Let $\theta \in \left(0, \frac{\pi}{2}\right)$ and let $f \in C^2(\mathcal{C}_\theta)$ be an unconditional function satisfying $\bar{\nabla}_\mu f = 0$, where $\mu$ denotes the unit normal vector to $\partial \mathcal{C}_\theta \subset \mathcal{C}_\theta$ pointing towards the lower half-space. Then
\begin{equation}
\int_{\mathcal{C}_\theta} f \left( \Delta_{\mathcal{C}_\theta} f + 2n f \right) \, dV_{\mathcal{C}_\theta} \leq 2n \frac{\left( \int_{\mathcal{C}_\theta} f \, dV_{\mathcal{C}_\theta} \right)^2}{\int_{\mathcal{C}_\theta} dV_{\mathcal{C}_\theta}}.
\end{equation}
\end{thm}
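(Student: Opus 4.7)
The plan is to derive Theorem \ref{thm 2} as the second-order linearization of Theorem \ref{thm 1} at $\Sigma = \mathcal{C}_\theta$. Given $f$ as in the hypothesis, I consider the one-parameter family $\{\Sigma_t\}_{|t|<\varepsilon}$ with capillary support function $s_t = \ell(1+tf)$, i.e.\ $\sigma_{\Sigma_t} = 1+tf$. For $|t|$ sufficiently small, strict convexity is preserved (an open condition on the relevant Hessian-type operator), unconditionality of $s_t$ follows from that of $f$, and the Neumann condition $\bar\nabla_\mu f = 0$ ensures that the contact angle $\theta$ is maintained along $\partial\Sigma_t$. Writing $V_1(t):=\operatorname{vol}(\widehat{\Sigma_t})$, $V_2(t):=\operatorname{vol}(\widehat{\Sigma_t^*})$, and $V_0:=\operatorname{vol}(\widehat{\mathcal{C}_\theta})$, Theorem \ref{thm 1} yields $V_1(t)V_2(t)\leq V_0^2$ with equality at $t=0$; hence $(V_1V_2)''(0)\leq 0$.

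Next, I Taylor-expand both factors to second order. From $V_2(t)=\int\sigma_t^{-n}\,dV_{\mathcal{C}_\theta}$ and the binomial expansion,
\begin{equation}
V_2(t) = V_0 - nt\int f\,dV_{\mathcal{C}_\theta} + \tfrac{n(n+1)}{2}t^2\int f^2\,dV_{\mathcal{C}_\theta} + O(t^3).
\end{equation}
For $V_1(t)=\int\sigma_t/G_t\,dV_{\mathcal{C}_\theta}$ I use the capillary Hadamard-type formula that expresses $1/G_t$ as the determinant of a self-adjoint second-order operator acting on $s_t$. Expanding,
\begin{equation}
\frac{1}{G_t} = 1 + t\bigl(\Delta_{\mathcal{C}_\theta}f + (n-1)f\bigr) + t^2 M(f) + O(t^3),
\end{equation}
with $M(f)$ proportional to the second elementary symmetric function $\sigma_2$ of the linearized Hessian-type operator. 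The first-order term combines with the first-order part of $V_2$ to give $V_1'(0) = -V_2'(0) = n\int f\,dV_{\mathcal{C}_\theta}$ -- this uses $\int\Delta_{\mathcal{C}_\theta}f\,dV_{\mathcal{C}_\theta} = 0$, which is Stokes' theorem combined with $\bar\nabla_\mu f = 0$. Consequently $(V_1 V_2)'(0)=0$ as required at a maximum.

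The crux is to evaluate $\int M(f)\,dV_{\mathcal{C}_\theta}$. A Bochner-type identity on $\mathcal{C}_\theta$ -- analogous to the classical spherical identity $\int_{\mathbb{S}^{n-1}}\sigma_2(\bar\nabla^2 f+f\,\mathrm{Id})\,d\sigma = \tfrac{n-2}{2}\int_{\mathbb{S}^{n-1}}f(\bar\Delta f+(n-1)f)\,d\sigma$ -- together with integration by parts in which every boundary term is annihilated by $\bar\nabla_\mu f = 0$ should yield
\begin{equation}
\int_{\mathcal{C}_\theta} M(f)\,dV_{\mathcal{C}_\theta} = \tfrac{n-2}{2}\int_{\mathcal{C}_\theta} f\bigl(\Delta_{\mathcal{C}_\theta}f + (n-1)f\bigr)\,dV_{\mathcal{C}_\theta}.
\end{equation}
Substituting this into the expansion of $V_1$ gives $V_1''(0) = n\int f\Delta_{\mathcal{C}_\theta}f\,dV_{\mathcal{C}_\theta} + n(n-1)\int f^2\,dV_{\mathcal{C}_\theta}$, and the inequality $(V_1V_2)''(0)\leq 0$ collapses, after elementary algebra (dividing throughout by $n$), to exactly the inequality claimed in Theorem \ref{thm 2}. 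The main technical obstacle is this Bochner-type identity on the non-closed surface $\mathcal{C}_\theta$: one must verify that the centro-affine commutator terms arising from the translated sphere $\mathbb{S}^{n-1}-\cos\theta E_n$ combine cleanly, and confirm that the boundary integrals produced upon integration by parts vanish under $\bar\nabla_\mu f = 0$.
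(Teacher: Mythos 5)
Your overall strategy matches the paper's: both of you linearize \autoref{thm 1} along a family $\Sigma_t$ with $\sigma_{\Sigma_t}=1+tf$ (equivalently $s_{\Sigma_t}=\ell+t\psi$ with $\psi=\ell f$ and $\bar\nabla_\mu\psi=\cot\theta\,\psi$), use $(V_1V_2)''(0)\le 0$ at the maximizer, and Taylor-expand the two factors. Your expansion of $V_2(t)=\int\sigma_t^{-n}\,dV_{\mathcal{C}_\theta}$ is correct, your first-derivative bookkeeping $V_1'(0)=n\int f\,dV_{\mathcal{C}_\theta}=-V_2'(0)$ is correct (it uses exactly the integration-by-parts identity $\int\ell\bar\Delta\psi\,d\sigma=\int\psi\bar\Delta\ell\,d\sigma$ together with $\bar\Delta\ell=(n-1)(1-\ell)$), and the final algebra from $(V_1V_2)''(0)\le 0$ to the stated inequality is routine. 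So the architecture is sound.

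The genuine gap is the step you yourself flag: the identity
\[
\int_{\mathcal{C}_\theta} M(f)\,dV_{\mathcal{C}_\theta} \;=\; \tfrac{n-2}{2}\int_{\mathcal{C}_\theta} f\bigl(\Delta_{\mathcal{C}_\theta}f+(n-1)f\bigr)\,dV_{\mathcal{C}_\theta},
\]
i.e.\ the $\sigma_2$--Bochner formula for the linearized Hessian operator on the \emph{non-closed} cap $\mathcal{C}_\theta$. On $\mathbb{S}^{n-1}$ this is a classical mixed-volume identity; on $\mathcal{C}_\theta$ one must check that every boundary term coming from integrating by parts on $\partial\mathcal{C}_\theta$ vanishes under the Robin-type condition $\bar\nabla_\mu\psi=\cot\theta\,\psi$, and this is neither obvious nor carried out in your sketch. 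The paper circumvents exactly this by not Taylor-expanding $1/G_t$ to second order at all. Instead it records the \emph{exact} first-variation (Hadamard) formula \eqref{a}, which requires only the pointwise computation $\frac{\partial}{\partial t}\big|_{t=0}\tfrac{1}{G_t}=\bar\Delta\psi+(n-1)\psi$ (the linearization of a determinant at the identity, no integration by parts needed), and then differentiates \eqref{a} once more at $t=0$; the resulting second-derivative formula uses only the elementary identity $\int\ell\bar\Delta\psi\,d\sigma=\int\psi\bar\Delta\ell\,d\sigma$ whose boundary terms cancel immediately from the Robin conditions on both $\ell$ and $\psi$. If you keep your route, you must actually prove the capillary $\sigma_2$ identity (or reduce to it by observing that your answer for $V_1''(0)$ must agree with the one obtained from \eqref{a}, which rather defeats the point); otherwise, switch to differentiating the first-variation formula, which is shorter and avoids the obstacle entirely.
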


\section{\texorpdfstring{Case $\theta\in (0,\frac{\pi}{2})$: Unconditional bodies}{Case theta in (0, pi/2}}
\subsection{The volume product}
Throughout this section we assume $\theta\in (0,\frac{\pi}{2})$. Let $C$ denote the strictly convex body obtained via $\wh{\mathcal{C}_{\theta}}$ and its reflection across $x_n=0$. The (standard) support function of $C$ at $x=(x',x_n)\in \bbR^n$ is given by
\eq{
h_C(x) =
\begin{cases}
|x'| \sin \theta & \text{if } |x_n| \leq |x| \cos \theta ,\\
|x| - |x_n| \cos \theta & \text{if } |x_n| \geq |x| \cos\theta.
\end{cases}
}
The Minkowski functional/gauge function of $C$ is given by
\eq{\label{def: C-Minkowski functional}
p_C(x)=\frac{\cos \theta \,|x_n|+\sqrt{|x_n|^2+\sin^2 \theta \,|x'|^2}}{\sin^2 \theta}.
}
This is the support function of a convex body whose boundary consists of a cylindrical part capped by two spheroids.

To obtain the formula for $p_C$, note that
\eq{
C=\left\{x=(x',x_n)\in \bbR^n:\ |x'|^2+\(|x_n|+\cos\theta\)^2\le 1\right\}.
}
By definition of the Minkowski functional, $p_C(x)$ is the unique $\la>0$ such that $x/\la\in C$. Equivalently,
\eq{
\frac{|x'|^2}{\la^2}+\left(\frac{|x_n|}{\la}+\cos\theta\right)^2=1.
}
Hence, $\la$ satisfies
\eq{
\sin^2\theta\,\la^2-2\cos\theta\,|x_n|\,\la-\(|x_n|^2+|x'|^2\)=0.
}
Finding the positive root, we obtain \eqref{def: C-Minkowski functional}.

Let $V=\frac{1}{2}p_C^2$ and $V^{\ast}=\frac{1}{2}h_C^2$. Note that $V^{\ast}$ is the Legendre transform of $V$; see \cite[eq. (1.49)]{Schneider} and \cite[Lem. 1.7.13]{Schneider}.

For $x,y\in [0,\infty)^n$, let us adopt the notation
\eq{
\sqrt{xy}=(\sqrt{x_1y_1},\ldots, \sqrt{x_ny_n}).
}

\begin{lemma}\label{lem 1} The function
 $x\mapsto V(\sqrt{x})$ is concave in $(0,\infty)^n$.
\end{lemma}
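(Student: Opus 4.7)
My first move would be to exploit the symmetries of $p_C$: since $p_C(y)$ depends only on $|y_n|$ and $|y'|$, the function $V(y)=\tfrac12 p_C(y)^2$ depends on $y$ only through $y_n^2$ and $|y'|^2 = y_1^2+\cdots+y_{n-1}^2$. Substituting $y=(\sqrt{x_1},\ldots,\sqrt{x_n})$, which is legitimate on $(0,\infty)^n$, therefore produces the explicit expression
\begin{equation}
V(\sqrt{x}) = \frac{1}{2\sin^4\theta}\Bigl(\cos\theta\,\sqrt{x_n}+\sqrt{x_n+\sin^2\theta\,(x_1+\cdots+x_{n-1})}\Bigr)^2.
\end{equation}

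Next I would expand the square and split the result into an affine part and a single nontrivial term. Set $\alpha(x)=x_n$ and $\beta(x)=x_n+\sin^2\theta\,(x_1+\cdots+x_{n-1})$, both positive affine functions on $(0,\infty)^n$. Up to the positive constant $\tfrac{1}{2\sin^4\theta}$, the expansion reads
\begin{equation}
(1+\cos^2\theta)\,x_n+\sin^2\theta\,(x_1+\cdots+x_{n-1})+2\cos\theta\,\sqrt{\alpha(x)\,\beta(x)}.
\end{equation}
The first two summands are affine and thus trivially concave, so the problem collapses to showing concavity of $x\mapsto\sqrt{\alpha(x)\,\beta(x)}$ on $(0,\infty)^n$.

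This last concavity follows from a purely two-variable fact: the bivariate geometric mean $(s,t)\mapsto\sqrt{st}$ is concave on $(0,\infty)^2$. This is classical; one can, for instance, note that $\sqrt{st}$ is positive and $1$-homogeneous, while AM--GM yields the superadditivity
\[
\sqrt{s_1 t_1}+\sqrt{s_2 t_2}\;\leq\;\sqrt{(s_1+s_2)(t_1+t_2)},
\]
which for positive $1$-homogeneous functions is equivalent to concavity; alternatively the $2\times 2$ Hessian is manifestly negative semidefinite. Composition with the affine, positive-valued map $x\mapsto(\alpha(x),\beta(x))$ then preserves concavity, and summing three concave functions gives the claim.

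I do not foresee any serious obstacle: the argument is essentially the identity displayed above combined with the concavity of the geometric mean. The only arithmetic point to check is that $\beta(x)>0$ throughout $(0,\infty)^n$, which is immediate since $\sin^2\theta>0$ and each $x_i>0$.
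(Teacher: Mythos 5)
Your proof is correct, and it reaches the same punchline as the paper — concavity of $x\mapsto V(\sqrt{x})$ reduces to concavity of the single term $\sqrt{x_n\,(x_n+\sin^2\theta\,(x_1+\cdots+x_{n-1}))}$, with the sign $\cos\theta>0$ for $\theta\in(0,\tfrac{\pi}{2})$ doing the work. Where you diverge is in how you finish: the paper sets $s=x_1+\cdots+x_{n-1}$, $t=x_n$, computes the $2\times 2$ Hessian of $v(s,t)$ explicitly, finds it equals a negative multiple of the rank-one matrix $\bigl(\begin{smallmatrix}t^2 & -st\\ -st & s^2\end{smallmatrix}\bigr)$, and then lifts this to the $n\times n$ Hessian by evaluating the quadratic form on an arbitrary $a\in\mathbb{R}^n$. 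You instead write the nontrivial term as $\sqrt{\alpha(x)\beta(x)}$ with $\alpha,\beta$ positive and affine, and invoke the classical concavity of the bivariate geometric mean $(s,t)\mapsto\sqrt{st}$ together with closure of concavity under affine precomposition and under adding affine functions. Your route avoids any Hessian computation and makes the structural reason for the sign condition transparent, at the cost of quoting the geometric-mean fact rather than verifying it inline; the paper's explicit Hessian, conversely, is self-contained and also immediately reveals that the same computation gives convexity for $\theta\in(\tfrac{\pi}{2},\pi)$, a fact the authors use at the start of the next section.
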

\begin{proof}
Let $s=x_1+\cdots+x_{n-1}$ and $x_n=t$. Define
\eq{
v(s,t)=V(\sqrt{x})=\frac{1}{2\sin^4\theta}\left((1+\cos^2\theta)t+(\sin^2\theta)s +2\cos\theta \sqrt{t^2+s t\sin^2\theta}\right).
}
Then the Hessian of $v$ (a function of two variables $s,t$) is
\eq{
D^2v=\begin{bmatrix}
v_{ss} & v_{st}\\
v_{ts} & v_{tt}
\end{bmatrix}=-\frac{\cos\theta}{4(t^2+s t\sin^2\theta)^\frac{3}{2}}\begin{bmatrix}
    t^2 & -st \\
    -st & s^2
\end{bmatrix}.
}
From this we obtain
\begin{equation}
D^2V(\sqrt{x}) = -\frac{\cos\theta}{4(t^2+s t\sin^2\theta)^\frac{3}{2}}
\begin{bmatrix}
t^2 & \cdots & t^2 & -st \\
\vdots & \ddots & \vdots & \vdots \\
t^2 & \cdots & t^2 & -st \\
-st& \cdots & -st & s^2
\end{bmatrix}.
\end{equation}
Recall $\theta\in (0,\frac{\pi}{2})$. Then for any $a=(a_1,\ldots, a_n)\in \mathbb{R}^n$, there holds
\eq{
D^2V(\sqrt{x})(a,a)=-\frac{\cos\theta}{4(t^2+s t\sin^2\theta)^{\frac{3}{2}}}\big(t\sum_{i=1}^{n-1}a_i-sa_n\big)^2 \leq 0.
}
\end{proof}

Let $\Omega\subset \bbR^n$ be an open set, and let $u:\Omega\to \bbR$ be a convex function. For a Borel set $E\subset \Omega$, the Monge--Amp\`ere measure of $u$ on $E$ is defined by
\eq{
\mu_u(E):=|\partial u(E)|,
}
where $\partial u(E):=\bigcup_{x\in E}\partial u(x)$, and $\partial u(x)$ denotes the subdifferential of $u$ at $x$, namely
\eq{
\partial u(x):=\{p\in \bbR^n:\; u(y)\geq u(x)+\ip{p}{y-x}\ \text{for all } y\in \Omega\}.
}
Here $|\cdot|$ denotes the Lebesgue measure in $\bbR^n$.

\begin{lemma}\label{lem 2}
The function $V^{\ast}$ is of class $C^1$, and for $y=(y',y_n)$ satisfying $|y_n| \leq |y| \cos \theta$, we have
\begin{equation}
DV^{\ast}(y) = \sin^2 \theta \, (y', 0).
\end{equation}
Moreover, we have
\eq{
\det D^2V^{\ast}(y) =
\begin{cases}
0 & \text{if } |y_n| < |y| \cos \theta , \\
(1-\frac{|y_n|}{|y|}\cos\theta)^{n+1} & \text{if } |y_n| > |y| \cos \theta.
\end{cases}
}
In particular, the Monge-Amp\`ere measure of $V^{\ast}$ is
\eq{
\mu_{V^{\ast}}(A)=\int_{A\cap\{y:\, |y_n|>|y|\cos\theta \}}\(1-\frac{|y_n|}{|y|}\cos\theta\)^{n+1}\, dy,
}
for any Borel set $A$.
\end{lemma}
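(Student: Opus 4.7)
The locus $\{|y_n| = |y|\cos\theta\}$, which separates the two cases in the formula for $h_C$, splits $\bbR^n\setminus\{0\}$ into the \emph{cylindrical} region $R_1 = \{|y_n| < |y|\cos\theta\}$ and the \emph{spheroidal} region $R_2 = \{|y_n| > |y|\cos\theta\}$. On $R_1$ we have $V^{\ast}(y) = \tfrac{1}{2}\sin^2\theta\,|y'|^2$, which is independent of $y_n$; direct differentiation immediately yields the stated formula for $DV^{\ast}$ and shows that $D^2V^{\ast}$ has a zero last row and column, so its determinant vanishes. On $R_2$ we have $V^{\ast} = \tfrac{1}{2}\phi^2$ with $\phi(y) := |y| - |y_n|\cos\theta$; since $y_n\neq 0$ on $R_2$, both $|y|$ and $|y_n|$ are smooth there and $V^{\ast}$ is $C^\infty$ on $R_2$. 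To verify $C^1$ regularity across $\partial R_1 = \partial R_2$, I would note that on that boundary $\phi = |y|\sin^2\theta$ and $D\phi = \hat{y} - \cos\theta\,\sgn(y_n)\,e_n$, whence $\partial_i V^{\ast} = \phi\,\partial_i\phi$ gives $\sin^2\theta\, y_i$ for $i<n$ and $0$ for $i=n$, matching the $R_1$ expressions.

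For the Hessian on $R_2$, I would apply the product rule to $DV^{\ast} = \phi\, D\phi$, using $D^2|y| = |y|^{-1}(I - \hat{y}\hat{y}^T)$ and $D^2|y_n| = 0$ on each half-space $\{\pm y_n > 0\}$, to arrive at
\begin{equation*}
D^2V^{\ast} = w w^T + c\,(I - \hat{y}\hat{y}^T), \qquad w := D\phi,\q c := \phi/|y| = 1 - \cos\theta\,|y_n|/|y|.
\end{equation*}
Writing this as $cI + ww^T - c\,\hat{y}\hat{y}^T$, the matrix determinant lemma (first for the rank-one update $cI \mapsto cI + ww^T$, giving $c^{n-1}(c+|w|^2)$, and then Sherman--Morrison for the subsequent subtraction of $c\,\hat{y}\hat{y}^T$) telescopes to $\det D^2V^{\ast} = c^{n-1}(\hat{y}\cdot w)^2$. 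The algebraic identity $\hat{y}\cdot w = 1 - \cos\theta\,|y_n|/|y| = c$ then produces the asserted value $c^{n+1} = (1 - \cos\theta\,|y_n|/|y|)^{n+1}$. A coordinate-based alternative is to work in an orthonormal basis adapted to $\op{span}(\hat{y}, e_n)$: on the $(n-2)$-dimensional orthogonal complement the operator acts as the scalar $c$, and a short $2\times 2$ computation on the residual block yields determinant $c^3$.

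The Monge--Amp\`ere measure formula then follows from general principles: for a convex $C^1$ function, the Monge--Amp\`ere measure coincides with the push-forward of Lebesgue measure under the gradient map. On $R_1$ the image of $DV^{\ast}$ lies in the hyperplane $\{z_n = 0\}$ and is Lebesgue-null, as is the image of the separating hypersurface $\partial R_1$ (which itself is Lebesgue-null in $\bbR^n$). On each of the two connected components of $R_2$, positive-definiteness of $D^2V^{\ast}$ gives strict convexity, hence injectivity of $DV^{\ast}$, and the area formula yields $|DV^{\ast}(A\cap R_2)| = \int_{A\cap R_2}\det D^2V^{\ast}\,dy$; the two components map to disjoint half-spaces under $DV^{\ast}$, so there is no overcounting. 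This delivers the stated integral expression for $\mu_{V^{\ast}}$. The main technical obstacle is the Hessian determinant on $R_2$; the two rank-one updates combined with the identity $\hat{y}\cdot w = c$ give the cleanest route I see.
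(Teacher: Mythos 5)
Your argument is correct, and it takes a genuinely more self-contained route than the paper's. The paper's proof is two short citations: strict convexity of $C$ gives differentiability of $h_C$ away from $0$ via \cite[Cor.~1.7.3]{Schneider}, $C^1$ then follows from \cite[Thm.~1.5.4]{Schneider} (with two-homogeneity handling the origin), and the determinant formula is read off from the classical identity $\det D^2\tfrac12 h^2(x)=\tfrac{h^{n+1}}{\mathcal K}(x/|x|)$ cited from \cite[Thm.~3.1]{MP14}, with the Gauss curvature of $\partial C$ supplied implicitly (zero on the cylindrical part, the explicit spheroidal value elsewhere). You instead compute $D^2V^{\ast}$ directly on each region, using $D^2V^{\ast}=ww^T+c(I-\hat y\hat y^T)$ on the spheroidal region and the matrix determinant lemma plus Sherman--Morrison, together with the clean identity $\hat y\cdot w=c$, to produce $c^{n+1}$; and your Monge--Amp\`ere argument (image of the cylindrical region and of the separating cone is Lebesgue-null, strict convexity on each spheroidal component gives injectivity, the two components land in disjoint half-spaces) is a direct verification of what the citation to the Gauss-curvature formula encapsulates. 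The trade-off: the paper's proof is shorter but leans on the reader knowing the cited identity and its scope (it is stated for smooth strictly convex bodies, while $C$ fails strict smoothness exactly along $\partial R_1$, so one must quietly restrict to the smooth pieces); yours is longer but elementary and explicit, and it makes the degeneracy locus transparent. One small omission: you establish that the two one-sided gradient formulas agree on the interface $\partial R_1$ and that $V^{\ast}$ is smooth in each open region, but you do not explicitly address differentiability at the origin; as in the paper, two-homogeneity (or the explicit bound $V^{\ast}(y)=O(|y|^2)$) gives $DV^{\ast}(0)=0$, and convexity plus everywhere-differentiability then upgrades to $C^1$. This is a one-line fix, not a substantive gap.
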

\begin{proof} 
Since $C$ is strictly convex, by \cite[Cor. 1.7.3]{Schneider}, $h_C$ is differentiable at $y \neq 0$. Since $V^{\ast}$ is two-homogeneous, we have $DV^{\ast}(0) = 0$. Moreover, a convex differentiable function on $\mathbb{R}^n$ is $C^1$; see \cite[Thm. 1.5.4]{Schneider}.

The second claim follows from a well-known identity: a smooth, strictly convex hypersurface with support function $h$ and Gauss curvature $\cK$ satisfies
\eq{
\det D^2\frac{1}{2}h^2(x)=\frac{h^{n+1}}{\mathcal{K}}\left(\frac{x}{|x|}\right).
}
Here the Gauss curvature is regarded as a function on the unit sphere via the Gauss map. See, for instance, \cite[Thm. 3.1]{MP14}.
\end{proof}

\begin{lemma}\label{lem 3}Let $A=(0,\infty)^n$ and $B=\{ y \in (0,\infty)^n :\, y_n> |y|\cos\theta \}$. Then $ D V(A) =B$, $DV: A\to B$ is a $C^{\infty}$ diffeomorphism and
\eq{
\det D^2V(x)\det D^2V^{\ast}(DV(x))=1\q \forall x\in A.
}
\end{lemma}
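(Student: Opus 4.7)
The plan is to verify directly, from the explicit formulas for $V$ and $V^{\ast}$, that $DV$ and $DV^{\ast}$ are smooth mutual inverses between $A$ and $B$, and then to deduce the determinant identity by differentiating $DV^{\ast}\circ DV=\id_A$. On $A$ the absolute values in the formula for $p_C$ can be dropped and the radical $R(x):=\sqrt{x_n^2+\sin^2\theta\,|x'|^2}$ is strictly positive, so $V=\tfrac{1}{2}p_C^2\in C^{\infty}(A)$. Dually, on $B$ we have $y_n>|y|\cos\theta\ge 0$, placing $y$ in the spherical-cap regime $h_C(y)=|y|-y_n\cos\theta$, so $V^{\ast}=\tfrac{1}{2}h_C^2\in C^{\infty}(B)$; moreover \autoref{lem 2} gives $\det D^2V^{\ast}>0$ on $B$.

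Differentiating $V=\tfrac{1}{2\sin^4\theta}(\cos\theta\,x_n+R)^2$ produces the components of $DV(x)$, each manifestly positive on $A$. Summing their squares collapses, via $\sin^2\theta\,|x'|^2=R^2-x_n^2$, into a perfect square, yielding the clean formulas
\eq{
|DV(x)|=\fr{(R+\cos\theta\,x_n)^2}{R\sin^4\theta},\q (DV(x))_n-|DV(x)|\cos\theta=\fr{x_n(R+\cos\theta\,x_n)}{R\sin^2\theta}>0,
}
which together place $DV(x)$ in $B$. A symmetric computation from $V^{\ast}(y)=\tfrac{1}{2}(|y|-y_n\cos\theta)^2$ on $B$ shows every component of $DV^{\ast}(y)$ is strictly positive, so $DV^{\ast}(B)\subseteq A$.

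Using the auxiliary identity $|DV(x)|-(DV(x))_n\cos\theta=(R+\cos\theta\,x_n)/\sin^2\theta$ extracted from the previous step, substituting $y=DV(x)$ into the explicit formula for $DV^{\ast}$ causes the various factors to telescope back to $x$; the reverse composition $DV\circ DV^{\ast}=\id_B$ follows analogously. Since both maps are $C^{\infty}$ on their domains, $DV\colon A\to B$ is a $C^{\infty}$ diffeomorphism with inverse $DV^{\ast}$, and differentiating $DV^{\ast}\circ DV=\id_A$ and taking determinants gives $\det D^2V(x)\det D^2V^{\ast}(DV(x))=1$. The only nonformal ingredient is the algebraic collapse $\sin^4\theta\,|x'|^2+(R\cos\theta+x_n)^2=(R+\cos\theta\,x_n)^2$ that simplifies $|DV|$; conceptually, this identity is forced by the Legendre duality between the upper spherical cap of $\partial C$ (parametrized by $A$ through $V$) and the capillary-cap region of $\partial C^{\circ}$ (parametrized by $B$ through $V^{\ast}$).
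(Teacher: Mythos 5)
Your proposal is correct but follows a genuinely different route from the paper. The paper introduces polar-like coordinates $r=\sqrt{x_n^2+\sin^2\theta\,|x'|^2}$ and $\phi$ with $\cos\phi=x_n/r$, computes $y_n$ and $|y'|$ explicitly, and then observes that the slope $|y'|/y_n=f(\phi)=\sin\theta\sin\phi/(\cos\theta+\cos\phi)$ is a strictly increasing bijection from $(0,\tfrac{\pi}{2})$ onto $(0,\tan\theta)$; combined with the one-homogeneity of $DV$ and the fact that the direction of $(DV)'$ tracks the direction of $x'$, this gives $DV(A)=B$ directly. Bijectivity then comes from strict convexity of $V$ on $A$, the diffeomorphism property from the inverse function theorem, and the determinant identity from the Legendre-transform relation $DV^{\ast}(DV(x))=x$. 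You instead avoid the angular change of variables: you verify by explicit differentiation of the closed-form expressions that $DV(A)\subseteq B$ (via the clean identities for $|DV|$ and $(DV)_n-|DV|\cos\theta$), that $DV^{\ast}(B)\subseteq A$, and that the two gradient maps are mutual inverses by direct substitution, which is an elementary hand-verification of the Legendre-duality relation the paper simply cites. I checked the algebraic collapse $\sin^4\theta|x'|^2+(R\cos\theta+x_n)^2=(R+\cos\theta\,x_n)^2$ and the telescoping $DV^{\ast}(DV(x))=x$ in both coordinates; both are correct. Your route is more computational but more self-contained (it does not appeal to monotonicity of $f(\phi)$ or to abstract strict convexity), while the paper's is shorter once one is willing to invoke the standard facts about strictly convex Legendre pairs.
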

\begin{proof}
Let $r=\sqrt{x_n^2+|x'|^2\sin^2\theta}$ and define $\phi\in (0,\frac{\pi}{2})$ through $\cos \phi=\frac{x_n}{r}$ and $\sin \phi=\frac{\sin\theta |x'|}{r}$. Let $y=(y',y_n)=DV(x)$. Then
\eq{
y_n&=\frac{r}{\sin^4\theta}(1+\cos\theta \cos\phi)(\cos\theta+\cos\phi),\\
y'&=\frac{1+\cos\theta\cos\phi}{\sin^2\theta}x',\quad |y'|=\frac{r}{\sin^3\theta}(1+\cos\theta\cos\phi)\sin\phi.
}

Define $f(\phi)=\frac{\sin\theta \sin\phi}{\cos\theta+\cos\phi}=\frac{|y'|}{y_n}$ for $\phi\in (0,\frac{\pi}{2})$. Since $f$ is strictly increasing and maps $(0,\frac{\pi}{2})$ onto $(0,\tan\theta)$, we have $0<|y'|/y_n<\tan\theta$, which is equivalent to $y_n>|y|\cos\theta$. Hence $DV(A)\subset B$.

Let $y\in B$. We choose $\phi\in(0,\frac{\pi}{2})$ such that $f(\phi)=|y'|/y_n$, and define
\eq{
r=\frac{\sin^4\theta\, y_n}{(1+\cos\theta\cos\phi)(\cos\theta+\cos\phi)}.
}
Then
\eq{
x=r\left(\frac{\sin\phi}{\sin\theta}\frac{y'}{|y'|},\cos\phi\right)\in A \quad \text{and} \quad  DV(x)=y.
}
Hence $B\subset DV(A)$.

Note that $V=\frac{1}{2}p_C^2$ is $C^\infty$-smooth and strictly convex on the open set $A$, and thus the gradient map $DV: A \to B$ is bijective. By the inverse function theorem, since $D^2 V(x)$ is positive definite for $x \in A$, the map $DV$ is a diffeomorphism. Moreover, from the definition of the Legendre transform, for $y \in B$, we have $DV^{\ast}(y) = x$, where $x \in A$ satisfies $y = DV(x)$. Thus, $DV^{\ast}(DV(x))=x$ and the relation between the determinants follows.
\end{proof}

We next recall the Pr\'ekopa--Leindler inequality, along with the characterization of its equality cases; see also \cite{BB10, BD21, BFR23}.

\begin{theorem}[Pr\'ekopa--Leindler inequality, equality by Dubuc]\cite{Pre71,Lei72,Bor75,Dub77}
Let $\lambda \in (0, 1)$ and let $f$, $g$ and $h$ be non-negative integrable functions on $\mathbb{R}^n$. Assume that
\[
h((1 - \lambda)x + \lambda y) \geq f(x)^{1-\lambda} g(y)^{\lambda}
\]
for all $x, y \in \mathbb{R}^n$. Then
\[
\int_{\mathbb{R}^n} h  \geq \left( \int_{\mathbb{R}^n} f \right)^{1-\lambda} \left( \int_{\mathbb{R}^n} g  \right)^{\lambda}.
\]
If $f$ and $g$ have positive integrals and equality holds, then there exist $a > 0$, $w \in \mathbb{R}^n$, and a log-concave function $\tilde{h}$, such that $h = \tilde{h}$, $f(x) = a^{\la} \tilde{h}(x +\la w)$, and $g(x) = a^{ -(1-\lambda)} \tilde{h}(x -(1- \lambda) w)$ for almost every $x$.
\end{theorem}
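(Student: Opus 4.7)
The plan is to prove the inequality by induction on the dimension $n$, with the base case $n=1$ handled through a level-set argument combined with the one-dimensional Brunn--Minkowski inequality, and to analyze the equality case by carefully tracking equality through every step, following Dubuc.

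\textbf{One-dimensional inequality.} After the standard truncation $f \wedge N$ and $g \wedge N$ to reduce to essentially bounded functions (with a limiting argument at the end), I would rescale by $f \mapsto f/\sup f$, $g \mapsto g/\sup g$, and $h \mapsto h/((\sup f)^{1-\lambda}(\sup g)^{\lambda})$; this rescaling preserves both the pointwise hypothesis and the desired integral inequality, so I may assume $\sup f = \sup g = 1$. For each $t \in (0,1)$, the superlevel sets $F_t = \{f > t\}$, $G_t = \{g > t\}$, $H_t = \{h > t\}$ satisfy $(1-\lambda)F_t + \lambda G_t \subseteq H_t$, because $t^{1-\lambda}t^{\lambda} = t$. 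The one-dimensional Brunn--Minkowski inequality $|A+B| \geq |A| + |B|$ then yields $|H_t| \geq (1-\lambda)|F_t| + \lambda|G_t|$, and integrating in $t$ via the layer-cake formula and applying AM-GM produces
\[
\int h \geq \int_0^1 |H_t|\,dt \geq (1-\lambda)\int f + \lambda \int g \geq \left(\int f\right)^{1-\lambda}\left(\int g\right)^{\lambda}.
\]

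\textbf{Inductive step.} For $n \geq 2$, I would slice along the last coordinate. Fixing $x_n, y_n \in \mathbb{R}$ and setting $s_n = (1-\lambda)x_n + \lambda y_n$, the hypothesis on $(f,g,h)$ restricts to the PL hypothesis in dimension $n-1$ for the slices $(f(\cdot,x_n),\, g(\cdot,y_n),\, h(\cdot,s_n))$. By the induction hypothesis, the marginals $F(x_n) := \int_{\mathbb{R}^{n-1}} f(x',x_n)\,dx'$, and $G, H$ defined analogously, satisfy $H(s_n) \geq F(x_n)^{1-\lambda} G(y_n)^{\lambda}$. Applying the one-dimensional case to $(F,G,H)$ and invoking Fubini closes the induction.

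\textbf{Equality case (Dubuc).} This is the main obstacle and is where most of the work lies. I would trace equality through each step of the base case: equality in the layer-cake integration forces $h \leq 1$ almost everywhere (after the rescaling above); the rigidity of one-dimensional Brunn--Minkowski forces $F_t$ and $G_t$ to be intervals (up to null sets) whose $(1-\lambda,\lambda)$-Minkowski combination exhausts $H_t$; and equality in AM-GM forces $|F_t| = |G_t|$ for almost every $t$. Combining these shows that $F_t$ and $G_t$ must be translates of a single interval for a.e. $t$, and reconstructing $f, g, h$ from their level sets shows that on $\mathbb{R}$ the triple is, almost everywhere, a common log-concave profile together with two translates of it, with translation magnitudes in the ratio $\lambda : (1-\lambda)$ about a common center. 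In the inductive step, equality in the outer one-dimensional inequality applied to the marginals $F, G, H$, combined with equality in the inner $(n-1)$-dimensional inequality for almost every compatible pair of slices, yields both slice-wise and marginal-wise translation data. The delicate point — and what makes Dubuc's theorem nontrivial — is to verify that the slice-wise translation vectors are affinely consistent across slices, so that they assemble into a single vector $w \in \mathbb{R}^n$ and a single multiplicative constant $a > 0$. Once this affine consistency is established, taking $\tilde h$ to be a log-concave representative of $h$ produces the asserted representations $f = a^{\lambda}\,\tilde h(\cdot + \lambda w)$ and $g = a^{-(1-\lambda)}\,\tilde h(\cdot - (1-\lambda)w)$ almost everywhere.
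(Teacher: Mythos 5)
The paper does not prove this theorem; it is quoted from the literature, with the inequality credited to Pr\'ekopa, Leindler and Borell and the equality characterization to Dubuc. So there is no in-paper argument to compare against, and your proposal has to stand on its own. The inequality part of your outline (truncation, normalization to $\operatorname{ess\,sup} f=\operatorname{ess\,sup} g=1$, superlevel sets, one-dimensional Brunn--Minkowski, layer cake, AM--GM, then induction via Fubini on the marginals) is the standard proof and is essentially correct, modulo the routine caveat that $(1-\lambda)F_t+\lambda G_t$ need not be Lebesgue measurable, so one works with inner measure or simply with its measurable superset $H_t$.

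The equality case, which is the substance of Dubuc's contribution, has concrete gaps beyond the one you flag. First, ``equality in AM--GM forces $|F_t|=|G_t|$ for almost every $t$'' does not follow from your chain: AM--GM is applied once, to the integrals, and its equality case gives only $\int f=\int g$ after normalization, not a level-by-level identity; a separate argument is needed. Second, knowing that $F_t$ and $G_t$ are intervals up to null sets for a.e.\ $t$ makes $f$ and $g$ quasi-concave, not log-concave; the log-concavity of $\tilde h$ is part of the conclusion and can only come from exploiting the hypothesis $h((1-\lambda)x+\lambda y)\ge f(x)^{1-\lambda}g(y)^{\lambda}$ across \emph{different} levels $s,s'$ with $s^{1-\lambda}(s')^{\lambda}\ge t$, which your level-set scheme (which only compares $F_t$, $G_t$, $H_t$ at a common level $t$) never uses. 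Third, even granting $G_t=F_t+c(t)$, the constancy of $c(t)$ in $t$ --- your ``common center'' --- is exactly the same cross-level consistency problem; the difficulty you acknowledge only in the inductive step is already present in dimension one, and a counterexample-style check (e.g.\ step functions whose upper level sets translate differently at different heights) shows that the level-by-level rigidity alone does not force it. As written, the proposal proves the inequality but only gestures at the rigidity statement; to count as a proof of the theorem as stated it would need the one-dimensional Dubuc analysis carried out in full, not just named.
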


Note that $V$ is unconditional, two-homogeneous, and $C^{\infty}$-smooth on $(0,\infty)^n$. 
By \autoref{lem 1} and \cite[Lem. 5.17]{CKLR24}, we have
\begin{equation}\label{2-concavity}
    \langle a, DV(b) \rangle \;\geq\; \langle \sqrt{ab}, DV(\sqrt{ab}) \rangle 
    \;=\; 2 V(\sqrt{ab}) 
    \quad \forall\, a,b \in (0,\infty)^n.
\end{equation}

As in the proof of \cite[Thm. 5.19]{CKLR24} (see also \cite[Prop. 1]{FM07}), 
for any unconditional convex function $\Phi$ we have
\begin{equation}\label{key-ineq}
    \int_{(0,\infty)^n} e^{-\Phi(x)}\,dx 
    \int_{(0,\infty)^n} e^{-\Phi^{\ast}(DV(x))}\,dx 
    \;\leq\;
    \left( \int_{(0,\infty)^n} e^{-V(x)}\,dx \right)^2.
\end{equation}
To see this, define $f,g,h : \mathbb{R}^n \to (0,\infty)$ by
\begin{equation*}
    f(t) = e^{-\Phi(e^t) + \sum_{i=1}^n t_i},\, g(s) = e^{-\Phi^{\ast}(DV(e^s)) + \sum_{i=1}^n s_i},\, h(r) = e^{-V(e^r) + \sum_{i=1}^n r_i},
\end{equation*}
where $e^t := (e^{t_1}, \ldots, e^{t_n})$ for any $n$-tuple $t=(t_1,\ldots,t_n) \in \mathbb{R}^n$. From the definition of the Legendre transform together with \eqref{2-concavity}, 
it follows that
\eq{
\Phi(e^t)+\Phi^{\ast}(DV(e^s))\geq \langle e^t,DV(e^s)\rangle\geq  2V(e^{\frac{t+s}{2}}).
}
Hence, $\sqrt{f(t) g(s)} \;\leq\; h\!\left(\tfrac{s+t}{2}\right)$, and by the Pr\'ekopa--Leindler inequality,
\begin{equation*}
    \int_{\mathbb{R}^n} f \,\int_{\mathbb{R}^n} g 
    \leq
    \left( \int_{\mathbb{R}^n} h \right)^2.
\end{equation*}
Finally, note the following identities:
\begin{align*}
    \int_{(0,\infty)^n} e^{-\Phi(x)}\,dx 
    &= \int_{\mathbb{R}^n} e^{-\Phi(e^t) + \sum_{i=1}^n t_i}\,dt, \\
    \int_{(0,\infty)^n} e^{-\Phi^{\ast}(DV(x))}\,dx 
    &= \int_{\mathbb{R}^n} e^{-\Phi^{\ast}(DV(e^s)) + \sum_{i=1}^n s_i}\,ds, \\
    \int_{(0,\infty)^n} e^{-V(x)}\,dx 
    &= \int_{\mathbb{R}^n} e^{-V(e^r) + \sum_{i=1}^n r_i}\,dr,
\end{align*}
which together yield \eqref{key-ineq}.

Suppose $K$ is an unconditional convex body. We take $\Phi = \frac{1}{2} p_K^2$, where $p_K$ denotes the Minkowski functional of $K$. Note that $\Phi$ is an unconditional convex function. Now we use the change of variables $y=DV(x)$. By \autoref{lem 2} and \autoref{lem 3}, we have
\eq{
\int_{(0,\infty)^n} e^{-\Phi^{\ast}(DV(x))}\, dx &= \int_{DV((0,\infty)^n)} e^{-\frac{1}{2} p_{K^{\circ}}^2(y)} \det D^2 V^{\ast}(y)\, dy\\
&= \int_{(0,\infty)^n \cap \{ y :\, y_n > |y| \cos \theta \}} e^{-\frac{1}{2} p_{K^{\circ}}^2(y)} \left( 1 - \frac{y_n}{|y|} \cos \theta \right)^{n+1} \, dy.
}

Using spherical coordinates and that $K$ and $C$ are unconditional, we have
\eq{
\int_{(0,\infty)^n \cap \{ y :\, y_n > |y| \cos \theta \}} e^{-\frac{1}{2} p_{K^{\circ}}^2(y)} \left( 1 - \frac{y_n}{|y|} \cos \theta \right)^{n+1} \, dy &= \frac{c}{2^{n-1}} \int_{\mathbb{S}_{\theta}^{n-1}} \frac{h_C^{n+1}}{h_K^n}\, d\si, \\
\int_{(0,\infty)^n} e^{-\frac{1}{2} p_K^2(x)}\, dx &= \frac{n c}{2^n} \vol(K), \\
\int_{(0,\infty)^n} e^{-\frac{1}{2} p_C^2(x)}\, dx &= \frac{n c}{2^{n-1}} \vol(\widehat{\mathcal{C}_{\theta}}),
}
where $c := \int_0^{\infty} e^{-\frac{1}{2} s^2} s^{n-1} ds$; see also \cite[eq. (1.54)]{Schneider}. Therefore,
\eq{\label{ineq x}
\frac{\vol(K)}{2n} \int_{\mathbb{S}_{\theta}^{n-1}} \left( \frac{h_C}{h_K} \right)^n h_C\, d\si \leq \vol(\widehat{\mathcal{C}_{\theta}})^2.
}
This completes the proof of the first inequality in \autoref{thm 1}.

To prove the second inequality in \autoref{thm 1}, take $K = \widehat{\Sigma} \cup R(\widehat{\Sigma})$, where $\Sigma$ is an unconditional, strictly convex capillary hypersurface and $R$ denotes the reflection map across $x_n = 0$. Since
\eq{
\vol(\widehat{\Sigma^{\ast}}) = \frac{1}{n} \int_{\mathcal{C}_{\theta}} \frac{\ell^{n+1}}{s_{\Sigma}^n}\, d\si = \frac{1}{n} \int_{\mathbb{S}_{\theta}^{n-1}} \left( \frac{h_C}{h_K} \right)^n h_C\, d\si,
}
we obtain
\eq{
\vol(\widehat{\Sigma}) \vol(\widehat{\Sigma^{\ast}}) \leq \vol(\widehat{\mathcal{C}_{\theta}})^2.
}

Next, we proceed with the characterization of the equality cases. By the characterization of the equality cases  of the Pr\'ekopa--Leindler inequality, for some $a>0$ and $w\in \bbR^n$, we have 
\eq{
\Phi(e^t)=-\frac{1}{2}\log a+V(e^{t+\frac{1}{2}w})-\frac{1}{2}\sum_{i=1}^nw_i\q \forall t\in \bbR^n.
}
Sending all $t_i\to -\infty$, we obtain
\eq{
0=\log a+\sum_{i=1}^nw_i.
}
That is,
\eq{
p_K(e^t)=p_C(e^{t+\frac{1}{2}w})\q \forall t\in \bbR^n.
}
Therefore, for the matrix $A:=\operatorname{diag}(e^{\frac{1}{2}w_1},\ldots,e^{\frac{1}{2}w_n})$, we have
\eq{
p_{AK}(x)=p_C(x)\q \forall x\in (0,\infty)^n,
}
where we used $p_K(A^{-1}x)=p_{AK}(x)$. Since $AK,C$ are both unconditional, we must have $A\Si=\cC_{\theta}$. Due to  the following lemma, $w_1=\cdots=w_n$ and so 
\eq{
\Si=e^{-\frac{1}{2}w_n}\cC_{\theta}.
}

\begin{lemma}
	Let $A=\operatorname{diag}(a_1,\ldots,a_n)$ with $a_i>0$. If $A\cC_{\theta}$ is $\theta$-capillary, then for all $i$ we have $a_i=a_n$.
\end{lemma}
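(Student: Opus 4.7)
The plan is to use the capillary angle condition on $\partial(A\mathcal{C}_\theta)$ together with an explicit formula for the outer unit normal of the ellipsoidal cap $A\mathcal{C}_\theta$. I would first observe that $\mathcal{C}_\theta$ is the level set $\{x\in\overline{\bbR^n_+}:|x+\cos\theta\, E_n|^2=1\}$, so $A\mathcal{C}_\theta$ is cut out by $|A^{-1}u+\cos\theta\, E_n|^2=1$. Because $A$ is diagonal (hence symmetric), the chain rule gives that the outer unit normal to $A\mathcal{C}_\theta$ at $u=Ax$ is a positive multiple of $A^{-1}(x+\cos\theta\, E_n)$. Since $A$ preserves the hyperplane $\{x_n=0\}$, it also maps $\partial\mathcal{C}_\theta=\{(x',0):|x'|=\sin\theta\}$ onto $\partial(A\mathcal{C}_\theta)$.

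Next, I would evaluate the normal on the boundary. At $x=(x',0)$ with $|x'|=\sin\theta$, the vector $A^{-1}(x+\cos\theta\, E_n)$ equals $(x_1/a_1,\dots,x_{n-1}/a_{n-1},\cos\theta/a_n)$. After normalizing and imposing $\langle\nu,E_n\rangle=\cos\theta$, and using $\theta\in(0,\tfrac{\pi}{2})$ to divide by $\cos\theta$, the capillary condition reduces to
\begin{equation*}
\sum_{i=1}^{n-1}\frac{x_i^2}{a_i^2}=\frac{\sin^2\theta}{a_n^2}\q \text{for every } x'\in\bbR^{n-1}\text{ with }|x'|=\sin\theta.
\end{equation*}

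Finally, substituting $x'=\sin\theta\, e_i$ for each $i=1,\dots,n-1$ immediately yields $a_i=a_n$, as claimed. No step presents a real obstacle; the only mildly subtle point is tracking the correct transformation law for the conormal under $A$, and this is particularly simple here because $A$ is diagonal.
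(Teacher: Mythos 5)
Your argument is correct and is essentially the paper's own proof: both compute the outer unit normal to the ellipsoidal cap via $A^{-1}(x+\cos\theta\,E_n)$ (the paper phrases this as $A^{-1}\tilde x/|A^{-1}\tilde x|$ for the translated cap $A\mathbb{S}^{n-1}_\theta$), impose $\langle\nu,E_n\rangle=\cos\theta$ on the boundary circle $|x'|=\sin\theta$, and test against the coordinate directions $x'=\sin\theta\,e_i$ to conclude $a_i=a_n$. The only cosmetic difference is that you derive the normal from the level-set description of $\mathcal{C}_\theta$ while the paper uses the standard formula for the ellipsoid $A\mathbb{S}^{n-1}$; the substance is identical.
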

\begin{proof}
Let $x \in \mathbb{S}^{n-1}_{\theta}$. The outer unit  normal to $A \mathbb{S}^{n-1}_{\theta}$ at $Ax$ is given by
\eq{
    \nu(x) = \frac{A^{-1}x}{|A^{-1}x|},
}
and we have
\eq{
    \langle \nu(x), E_n \rangle 
    = \frac{x_n}{a_n\sqrt{\sum_{i=1}^n \tfrac{x_i^2}{a_i^2}}}.
}
In particular, for $i=1,\ldots,n-1$, the point $e_i = \sin\theta\, E_i + \cos\theta\, E_n \in \partial \mathbb{S}^{n-1}_{\theta}$ satisfies
\eq{
    \langle \nu(e_i), E_n \rangle
    = \frac{\cos\theta}{\sqrt{\cos^2\theta + \tfrac{a_n^2}{a_i^2}\sin^2\theta}}.
}
If $A \mathcal{C}_{\theta}$ is $\theta$-capillary, then there holds
\eq{
    \cos^2\theta + \frac{a_n^2}{a_i^2}\sin^2\theta = 1, \quad i=1,\ldots,n-1.
}
Hence, $a_i = a_n$ for all $i$.
\end{proof}

\begin{cor}\label{cor 1}Let $\alpha \in (-n, 0)$. For an unconditional convex body $K$ define
\eq{
\mathcal{E}_\alpha(K) = \int_{\mathbb{S}_{\theta}^{n-1}} \left( \frac{h_K}{h_C} \right)^\alpha h_C\, d\si.
}
If $\vol(K) = \vol(C)$, then
\eq{
\mathcal{E}_\alpha(K) \leq \mathcal{E}_\alpha(C) &= \int_{\mathbb{S}_{\theta}^{n-1}} h_C\, d\si, \\
\int_{\mathbb{S}_{\theta}^{n-1}} h_C \log h_K\, d\si &\geq \int_{\mathbb{S}_{\theta}^{n-1}} h_C \log h_C\, d\si.
}
Moreover, equality holds only when $K = C$.
\end{cor}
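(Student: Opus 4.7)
The plan is to read the corollary as an interpolation: \autoref{thm 1} supplies the sharp endpoint at $p = -n$, the value $p = 0$ gives the trivial identity $\mathcal{E}_0(K) = \int h_C\, d\sigma$, and the intermediate exponents together with the logarithmic statement follow from a single application of Jensen's inequality.

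\textbf{Endpoint reduction.} First I would recast \autoref{thm 1} in the cleaner form $\mathcal{E}_{-n}(K) \leq \mathcal{E}_{-n}(C)$ under $\operatorname{vol}(K) = \operatorname{vol}(C)$. Two identities are needed: $\operatorname{vol}(C) = 2\operatorname{vol}(\widehat{\mathcal{C}_\theta})$, which is immediate from $C = \widehat{\mathcal{C}_\theta} \cup R(\widehat{\mathcal{C}_\theta})$; and $n\operatorname{vol}(\widehat{\mathcal{C}_\theta}) = \int_{\mathbb{S}^{n-1}_\theta} h_C\, d\sigma$, which follows because the flat face of $\widehat{\mathcal{C}_\theta}$ passes through the origin, so $h_{\widehat{\mathcal{C}_\theta}}(-E_n) = 0$ and the planar face contributes nothing to the formula $n\operatorname{vol}(\widehat{\mathcal{C}_\theta}) = \int_{\mathbb{S}^{n-1}} h_{\widehat{\mathcal{C}_\theta}}\, dS_{\widehat{\mathcal{C}_\theta}}$, while on $\mathbb{S}^{n-1}_\theta$ the support functions $h_{\widehat{\mathcal{C}_\theta}}$ and $h_C$ coincide. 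Substituting yields $\mathcal{E}_{-n}(K) \leq \int h_C\, d\sigma = \mathcal{E}_{-n}(C)$.

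\textbf{Interpolation.} Setting $Z = \int_{\mathbb{S}^{n-1}_\theta} h_C\, d\sigma$ and $d\nu = Z^{-1} h_C\, d\sigma$, Jensen applied to the convex map $t \mapsto t^{n/|p|}$ (which requires only $|p| < n$) yields
\[
\frac{\mathcal{E}_p(K)}{Z} = \int_{\mathbb{S}^{n-1}_\theta} (h_C/h_K)^{|p|}\, d\nu \leq \left(\int_{\mathbb{S}^{n-1}_\theta} (h_C/h_K)^n\, d\nu\right)^{|p|/n} = \left(\frac{\mathcal{E}_{-n}(K)}{Z}\right)^{|p|/n} \leq 1,
\]
which rearranges to $\mathcal{E}_p(K) \leq Z = \mathcal{E}_p(C)$. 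For the logarithmic statement, the same probability measure together with Jensen applied to the concave function $\log$ gives
\[
n \int_{\mathbb{S}^{n-1}_\theta} \log(h_C/h_K)\, d\nu \leq \log \int_{\mathbb{S}^{n-1}_\theta} (h_C/h_K)^n\, d\nu = \log(\mathcal{E}_{-n}(K)/Z) \leq 0,
\]
and multiplying through by $Z$ and rearranging produces $\int h_C \log h_K\, d\sigma \geq \int h_C \log h_C\, d\sigma$.

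\textbf{Equality and main obstacle.} In each chain, equality forces both (a) tightness in Jensen, i.e.\ $h_K/h_C$ is $\nu$-a.e.\ constant on $\mathbb{S}^{n-1}_\theta$, and (b) $\mathcal{E}_{-n}(K) = Z$, which by tracing through the Pr\'ekopa--Leindler step in the proof of \autoref{thm 1} identifies $K = A^{-1}C$ for some positive diagonal matrix $A$. Requiring $h_C(A^{-1}x)/h_C(x)$ to be constant on the cap then forces $A$ to be scalar, and the volume normalization pins $A = I$, so $K = C$. I do not anticipate a serious obstacle; the only slightly delicate point is the bookkeeping in step one that collapses \autoref{thm 1} to the clean endpoint $\mathcal{E}_{-n}(K) \leq \mathcal{E}_{-n}(C)$, after which the remainder of the corollary reduces to two brief applications of Jensen's inequality.
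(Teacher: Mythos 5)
Your proposal is correct and follows essentially the same route as the paper: the paper's one‑line proof invokes H\"older's inequality together with \eqref{ineq x}, which is exactly your interpolation step once you rewrite Theorem~\ref{thm 1} as the clean endpoint bound $\mathcal{E}_{-n}(K)\le \mathcal{E}_{-n}(C)$ (your power‑function Jensen is the same fact as H\"older on the probability space $d\nu=Z^{-1}h_C\,d\sigma$). Your treatment of the equality case is actually more careful than what the paper records for the corollary: you correctly note that Jensen‑equality (giving $h_K/h_C$ constant on $\mathbb{S}^{n-1}_\theta$) must be combined with the Pr\'ekopa--Leindler rigidity $K=A^{-1}C$ to force $A$ scalar, since the lemma the paper uses to pin down $A$ assumes a capillary hypersurface rather than a general unconditional body.
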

\begin{proof}
The inequalities follow from Jensen's inequality and \eqref{ineq x}. If equality holds in either inequality, then equality must also hold in Jensen's inequality, and therefore $h_K/h_C$ is constant on $\mathbb{S}^{n-1}_\theta$ and hence $K=C$.
\end{proof}

\subsection{Linearization}
We linearize \eqref{conj} at $\cC_{\theta}$. Let $f:\mathcal C_\theta\to\bbR$ be an even $C^2$ function satisfying the Neumann condition $\bar\nabla_\mu f=0$, and set $\psi=f\ell$. Since $\bar\nabla_\mu\ell=\cot\theta\,\ell$ on $\partial\mathcal C_\theta$, it follows that
\eq{
\bar\nabla_\mu\psi=\cot\theta\,\psi.
}
Moreover, for sufficiently small $|t|$, the function $\ell+t\psi$ remains a capillary support function of a strictly convex capillary hypersurface; see \cite[Prop. 2.6]{MWWX25}.

Define
\eq{
\cP(t) = \vol(\wh{\Si_t}) \vol(\wh{\Si_t^{\ast}}).
}
Then, integrating by parts, we obtain
\begin{align}\label{a}
    n \frac{d}{dt} \cP = \int_{\cC_{\theta}} \frac{\psi}{G_{\Si_t}}\int_{\cC_{\theta}} \frac{\ell^{n+1}}{s_{\Si_t}^{n}} - \int_{\cC_{\theta}} \frac{s_{\Si_t}}{G_{\Si_t}} \int_{\cC_{\theta}} \frac{\ell^{n+1}}{s_{\Si_t}^{n+1}} \psi
\end{align}
and
\eq{
n \frac{d^2}{dt^2} \Big|_{t=0} \cP &= \int_{\cC_{\theta}} \psi (\bar{\De} \psi + (n-1) \psi) \int_{\cC_{\theta}} \ell - 2n \left( \int_{\cC_{\theta}} \psi \right)^2 + (n+1) \int_{\cC_{\theta}} \ell \int_{\cC_{\theta}} \frac{\psi^2}{\ell},
}
where all integrals are taken with respect to $d\si$. 

Rewriting the second derivative of $\cP$ at $t=0$ in terms of the centro-affine geometry of $\cC_{\theta}$ (more precisely, of $\bbS^{n-1} - \cos\theta\, E_{n}$), for any even function $f \cn \cC_{\theta} \to \bbR$ with $\bar{\nabla}_{\mu} f = 0$, we obtain
\eq{
\frac{1}{n} \frac{d^2}{dt^2} \Big|_{t=0} \cP &= \int_{\cC_{\theta}} f (\De_{\cC_{\theta}} f + (n-1) f) \, dV_{\cC_{\theta}} \int_{\cC_{\theta}} dV_{\cC_{\theta}} \\
&\q - 2n \left( \int_{\cC_{\theta}} f \, dV_{\cC_{\theta}} \right)^2 + (n+1) \int_{\cC_{\theta}} dV_{\cC_{\theta}} \int_{\cC_{\theta}} f^2 \, dV_{\cC_{\theta}}.
}
For detailed background on centro-affine geometry, we refer the reader to \cite{Mil23}. In the present argument, we only use the identity
\eq{
\Delta_{\mathcal{C}_{\theta}} f + (n-1) f
=\bar{\Delta} \psi + (n-1)\psi,
}
where $\bar{\Delta}$ denotes the Laplace--Beltrami operator on $(\bbS^{n-1},\bar g,\bar \nabla)$.

By \autoref{thm 1}, for $\theta\in (0,\tfrac{\pi}{2})$,  $\cC_{\theta}$ is a local maximizer of $\vol(\wh{\Si}) \vol(\wh{\Si^{\ast}})$ in the class of unconditional capillary hypersurfaces. Hence, if $f\in C^2(\cC_{\theta})$ is an unconditional function with $\bar{\nabla}_{\mu}f=0$, then 
\eq{\label{linearized-ineq}
\int_{\cC_{\theta}} f (\De_{\cC_{\theta}} f + 2n f) \, dV_{\cC_{\theta}} \leq 2n \frac{\left( \int_{\cC_{\theta}} f \, dV_{\cC_{\theta}} \right)^2}{\int_{\cC_{\theta}} dV_{\cC_{\theta}}}.
}
This completes the proof of \autoref{thm 2}.

\section{\texorpdfstring{Case $\theta\in (\frac{\pi}{2}, \pi)$: Unboundedness}{Case theta in (pi/2, pi)}}
From the proof of \autoref{lem 1}, it follows that $V(\sqrt{x})$ is convex for $\theta \in (\frac{\pi}{2}, \pi)$, which makes the method used in the proof of \autoref{thm 1} ineffective. In this section, we show that the volume product is in fact unbounded when $\theta \in (\frac{\pi}{2}, \pi)$.

\textbf{Example 1:} We construct a family of $C^{1,1}$-smooth convex bodies that are smooth near $x_{n} = 0$ and meet this hyperplane at the constant angle $\theta \in (\frac{\pi}{2}, \pi)$.  In this example, we write points in $\bbR^n$ in the form
$(x,y)$ with $x\in\bbR^{n-1}$ and $y\in\bbR$. For $\la > 0$, define
\[
\begin{aligned}
K_{\la} &\coloneqq \left\{ (x, y) \in \frac{1}{\la} \wh{\cC_\theta} : y \leq -\frac{\cos \theta}{\la} \right\} \\
&\q \cup \left\{ (x, y) \in \bbR^n : |x| \leq \frac{1}{\la}, -\frac{\cos \theta}{\la} \leq y \leq \la^{n-1} \right\} \cup \left( \frac{1}{\la} B^n + \la^{n-1} E_n \right).
\end{aligned}
\]

Note that $K_{\lambda}$ consists of a portion of capillary cap, joined to a vertical cylinder of radius $1/\lambda$, and topped by a ball of radius $1/\lambda$ whose center is translated upward by $\lambda^{n-1} E_n$. 

We have $\vol(\la K_{\la}) \geq \omega_{n-1} (\la^n + \cos \theta)$ and thus $\lim_{\la \to \8} \vol(K_{\la}) \geq \omega_{n-1}$ where $\omega_{n-1}$ is the volume of the unit ball in $\mathbb{R}^{n-1}$. 

The capillary support function $s_{\Si_\la}$ of the boundary hypersurface $\Si_\la=\overline{\partial K_{\la}\cap \bbR^n_+}$, restricted to the set $\cD_{\theta} \coloneqq \cC_\theta \cap \left\{ (x, y) \in \bbR^n : y \leq -\frac{1}{2}\cos \theta \right\}$, is:
\eq{
s_{\Si_\la}(\ze) = \ip{\frac{1}{\la} \ze}{\ze + \cos \theta E_n} \leq \frac{1 - \cos \theta}{\la}.
}
Moreover, we have
\eq{
n\lim_{\la \to \8} \vol(\wh{\Si_\la^*}) = \lim_{\la \to \8} \int_{\cC_\theta} \frac{\ell^{n+1}}{s_{\Si_\la}^n} \, d\si\geq \lim_{\la \to \8} \int_{\cD_{\theta}} \frac{\ell^{n+1}}{s_{\Si_\la}^n} \, d\si = \8,
}
meaning that $\vol(\wh{\Si_\la^*}) \to \8$, and thus $\vol(\wh{\Si_\la})\vol(\wh{\Si_\la^*}) \to \8$ as $\la \to \8$. 

\textbf{Example 2:} We consider the intersection of an ellipse and the half-space $y\geq 0$ given by:
\eq{
K= \left\{(x,y): \frac{x^2}{a^2} + \frac{(y - c)^2}{b^2} \leq 1, \q y \geq 0\right\},
}
where $a > 0$, $b > c > 0$ are constants to be determined later. Let us put $\Si=\overline{\partial K\cap \bbR^2_+}$. Suppose the contact angle of $\Si$ with $y=0$ is $\theta\in (\frac{\pi}{2},\pi)$. 

Using the polar coordinates, $\Si$ can be expressed by
\eq{
X(t)=(a\cos t, b\sin t+c),\quad  -\arcsin(\frac{c}{b})\leq t \leq \pi+\arcsin(\frac{c}{b}).
}
The unit outward normal at the point  $X(t)$ is given by
\eq{
N(t)=\frac{(b\cos t,a\sin t)}{\sqrt{b^2\cos^2 t+a^2\sin^2 t}}.
}
Note that the range of $N(t)$ as $t$ varies is the set
\eq{
\bbS_{\theta}^1:=\left\{(x,y)\in \bbS^1: y\geq \cos\theta=-\frac{ac}{\sqrt{b^4+(a^2-b^2)c^2}}\right\}.
}

We take $a=1/b$ and set $c = \eta b$, where $\eta \in (0, 1)$. Then
\eq{
\vol(\wh{\Si}) \geq \frac{\pi}{2}.
}
Moreover, we have
\eq{
\cos \theta &=- \frac{ac}{\sqrt{b^4+(a^2-b^2)c^2}}
&= -\frac{\eta}{\rt{(1 - \eta^2)b^4 + \eta^2}}
}
and hence
\eq{
\eta = -\frac{b^2 \cos\theta}{\rt{b^4 \cos^2 \theta + \sin^2 \theta}} = \frac{b^2}{\rt{b^4 + \tan^2 \theta}}.
}

The support function of $\wh{\Si}$, $h_{\wh{\Si}}$, over $\bbS_{\theta}^1$ is given by 
\eq{
h_{\wh{\Si}}(\psi)=h_{\wh{\Si}}(\cos \psi,\sin \psi) &=c \sin \psi+\sqrt{a^2 \cos^2 \psi+b^2\sin^2 \psi}\\
&=\frac{b^3 \sin \psi}{\sqrt{b^4+\tan^2\theta}}+\sqrt{b^{-2}\cos^2 \psi+b^2\sin^2 \psi},
}
where $\psi \in [\frac{\pi}{2}-\theta,\frac{\pi}{2}+\theta]$.
Then, for some constant $c_{\theta}>0$ we get
\eq{
\vol(\wh{\Si^{\ast}}) &\geq \frac{c_{\theta}}{2}\int_{\frac{\pi}{2}-\theta}^{\frac{\pi}{2}+\theta} \frac{1}{h_{\wh{\Si}}^2(\psi)} \, d\psi \\
&=c_{\theta} \int_{\frac{\pi}{2}-\theta}^{\frac{\pi}{2}} \frac{1}{(\frac{b^3 \sin \psi}{\sqrt{b^4+\tan^2\theta}}+\sqrt{b^{-2}\cos^2 \psi+b^2\sin^2 \psi})^2} \,d\psi.
}

Assume that $b\gg1$ and $\psi\in (\frac{\pi}{2}-\theta,\frac{\pi}{2}-\theta+\de)\subset (-\frac{\pi}{2},0)$, where $\de:=b^{-3}$. Then we have
\eq{\label{eq: ineq}
\sqrt{b^{-2} \cos^2 \psi + b^2 \sin^2 \psi} \leq \sqrt{b^{-2} \sin^2 \theta + b^2 \cos^2 \theta}.
}
To see this, let us define the function
\[
f(\alpha) = \sqrt{b^{-2} \cos^2 \alpha + b^2 \sin^2 \alpha},\q  \alpha \in ( -\frac{\pi}{2}, 0 ).
\]
Let $ \phi = \frac{\pi}{2} - \theta $. Since $ \theta \in \left( \frac{\pi}{2}, \pi \right) $, we have $ \phi \in \left( -\frac{\pi}{2}, 0 \right) $, and $ \psi \in (\phi, \phi + \delta) \subset \left( -\frac{\pi}{2}, 0 \right) $. The right-hand side of the inequality \eqref{eq: ineq} is
\[
\sqrt{b^{-2} \sin^2 \theta + b^2 \cos^2 \theta} = \sqrt{b^{-2} \cos^2 \phi + b^2 \sin^2 \phi} = f(\phi).
\]
Thus, the inequality reduces to $ f(\psi) \leq f(\phi) $ for $\psi \in (\phi, \phi + \delta)$. Define
\[
g(\alpha) = f(\alpha)^2 = b^{-2} \cos^2 \alpha + b^2 \sin^2 \alpha.
\]
Then
\[
g'(\alpha)  = -2 b^{-2} \cos \alpha \sin \alpha + 2 b^2 \sin \alpha \cos \alpha =   (b^2 - b^{-2})\sin 2\al<0.
\]
Thus $ f(\alpha) $ is decreasing.

Now using inequality \eqref{eq: ineq} we obtain
\eq{
h_{\wh{\Si}}(\psi)= &~\frac{b^3 \sin \psi}{\sqrt{b^4+\tan^2\theta}}+\sqrt{b^{-2}\cos^2 \psi+b^2\sin^2 \psi}\\
\leq &~\frac{b^3 \sin(\phi+\de)}{\sqrt{b^4+\tan^2\theta}}+\sqrt{b^{-2}\sin^2 \theta+b^2\cos^2 \theta}\\
=&~\frac{b^3 \cos(\theta-\de)}{\sqrt{b^4+\tan^2\theta}}+b^{-1}(-\cos\theta)\sqrt{b^4+\tan^2 \theta} \\
=&~\frac{b^3(\cos\theta\cos\de+\sin\theta \sin\de)-b^3\cos\theta-b^{-1}\sin\theta\tan\theta}{\sqrt{b^4+\tan^2\theta}}\\
= &~\frac{b^3(\cos\theta \,(\cos\de-1)+\sin\theta\sin\de)-b^{-1}\sin\theta\tan\theta}{\sqrt{b^4+\tan^2\theta}}.
}
Note that the leading term as $b\to \infty$ is
\eq{
\frac{b^3(-\frac{\de^2}{2}\cos\theta+\de\sin\theta )-b^{-1}\sin\theta\tan\theta}{b^2}\sim (\sin\theta) b^{-2}.
}
Therefore, we obtain
\eq{
\vol(\wh{\Si^{\ast}})\geq &~ c_{\theta}'\int_{\frac{\pi}{2}-\theta}^{\frac{\pi}{2}-\theta+\de}\frac{ b^{4}}{\sin^2\theta} \, d\psi \\
 =&~ \frac{c_{\theta}' b}{\sin^2\theta}  \ra \infty, \quad \text{as $b \ra \infty$}.
}
That is, $\vol(\wh{\Si}) \vol(\wh{\Si^{\ast}})  \ra \infty$ as $b \ra \infty$.

\section*{Acknowledgments}

The authors thank the referee for valuable suggestions.

Cabezas-Moreno and Ivaki were supported by the Austrian Science Fund (FWF) under Project P36545. Hu was supported by the National Key Research and Development Program of China (Grant No. 2021YFA1001800).

\vspace{25mm}

	\textsc{Institut f\"{u}r Diskrete Mathematik und Geometrie,\\ Technische Universit\"{a}t Wien,\\ Wiedner Hauptstra{\ss}e 8-10, 1040 Wien, Austria,\\} 
\email{\href{mailto:carlos.moreno@tuwien.ac.at}{carlos.moreno@tuwien.ac.at}}

\vspace{5mm}
	
	\textsc{School of Mathematical Sciences, Beihang University,\\ Beijing 100191, China,\\}
	\email{\href{mailto:huyingxiang@buaa.edu.cn}{huyingxiang@buaa.edu.cn}}

\vspace{5mm}
        
\textsc{Institut f\"{u}r Diskrete Mathematik und Geometrie,\\ Technische Universit\"{a}t Wien,\\ Wiedner Hauptstra{\ss}e 8-10, 1040 Wien, Austria,\\} \email{\href{mailto:mohammad.ivaki@tuwien.ac.at}{mohammad.ivaki@tuwien.ac.at}}


\begin{thebibliography}{99}

\bibitem[AAKM04]{AAKM04} 
S. Artstein-Avidan, B. Klartag, V. Milman, \textit{The Santal\'o point of a function, and a functional form of the Santal\'o inequality}, Mathematika \textbf{51} (2004), 33--48.

\bibitem[Bla45]{Bla45}  
W. Blaschke, \textit{Vorlesungen \"uber Differentialgeometrie. Band I: Elementare Differentialgeometrie}, 3rd ed., Springer, Berlin, 1945.

\bibitem[BB10]{BB10}  
K. M. Ball and K. J. B\"or\"oczky, \textit{Stability of the Pr\'ekopa--Leindler inequality}, Mathematika \textbf{56} (2010), 339--356.

\bibitem[Bor75]{Bor75} 
C. Borell, \textit{Convex set functions in $d$-space}, Period. Math. Hung. \textbf{6} (1975), 111--136.

\bibitem[BD21]{BD21} 
K. J. B\"or\"oczky, A. De, \textit{Stability of the Pr\'ekopa--Leindler inequality for log-concave functions}, Adv. Math. \textbf{386} (2021), 107810.

\bibitem[BFR23]{BFR23} 
K. J. B\"or\"oczky, A. Figalli, J. P. G. Ramos, \textit{A quantitative stability result for the Pr\'ekopa--Leindler inequality for arbitrary measurable functions},  Ann. Inst. H. Poincar\'e C \textbf{41} (2023), 565--614.

\bibitem[CKLR24]{CKLR24} 
A. Colesanti, A. Kolesnikov, G. Livshyts, L. Rotem, \textit{On weighted Blaschke-Santal\'o and strong Brascamp-Lieb inequalities}, arXiv:2409.11503 (2024).

\bibitem[Dub77]{Dub77} 
S. Dubuc, \textit{Crit\`eres de convexit\'e et in\'egalit\'es int\'egrales}, Ann. Inst. Fourier (Grenoble) \textbf{27} (1977), 135--165.

\bibitem[FM07]{FM07} 
M. Fradelizi, M. Meyer, \textit{Some functional forms of Blaschke-Santal\'o inequality}, Math. Z. \textbf{256} (2007), 379--395.

\bibitem[MP14]{MP14} 
M. Marini, G. De Philippis, \textit{A note on Petty's theorem}, Kodai Math. J. \textbf{37} (2014), 586--594.

\bibitem[MP90]{MP90} 
M. Meyer, A. Pajor, \textit{On the Blaschke-Santal\'o inequality}, Arch. Math. (Basel) \textbf{55} (1990), 82--93.

\bibitem[Lei72]{Lei72} 
L. Leindler, \textit{On a certain converse of H\"older's inequality. II}, Acta Sci. Math. (Szeged) \textbf{33} (1972), 217--223.

\bibitem[MWWX25]{MWWX25} 
X. Mei, G. Wang, L. Weng, C. Xia, \textit{Alexandrov-Fenchel inequalities for convex hypersurfaces in the half-space with capillary boundary, II}, Math. Z. \textbf{310} (2025), 71.

\bibitem[MWW25]{MWW25} 
X. Mei, G. Wang, L. Weng, \textit{The capillary Gauss curvature flow}, arXiv:2506.09840 (2025).

\bibitem[Mil23]{Mil23} 
E. Milman, \textit{Centro-affine differential geometry and the log-Minkowski problem}, J. Eur. Math. Soc. \textbf{27} (2025), 709--772.

\bibitem[Pre71]{Pre71} 
A. Pr\'ekopa, \textit{Logarithmic concave measures with application to stochastic programming}, Acta Sci. Math. (Szeged) \textbf{32} (1971), 301--316.

\bibitem[SR80]{SR80} 
J. Saint-Raymond, \textit{Sur le volume des corps convexes sym\'etriques}, S\'eminaire d'Initiation \`a l'Analyse, 20\`eme ann\'ee (1980/81), Expos\'e no. 11, Universit\'e Pierre et Marie Curie, Paris, 1--25.

\bibitem[San49]{San49} 
L. A. Santal\'o, \textit{An affine invariant for convex bodies of $n$-dimensional space}, Portugaliae Math. \textbf{8} (1949), 155--161.

\bibitem[Sch13]{Schneider} 
R. Schneider, \textit{Convex Bodies: The Brunn-Minkowski Theory}, Cambridge University Press, 2013.

\end{thebibliography}
\end{document}